\theoremstyle{plain}
\newtheorem*{theorem*}{Theorem}
\newtheorem*{remark*}{Remark}
\newtheorem*{example*}{Example}
\newtheorem*{conjecture*}{Conjecture}
\newtheorem{lemma}{Lemma}[subsection]
\newtheorem{thm}[lemma]{Theorem}
\newtheorem{prop}[lemma]{Proposition}
\newtheorem{crl}[lemma]{Corollary}
\newtheorem{question}[lemma]{Question}
\newtheorem{introtheorem}{Theorem}
\newtheorem{introcorollary}[introtheorem]{Corollary}
\newtheorem{introthm}[introtheorem]{Theorem}
\newcommand{\restate}[3]{
    \noindent \textbf{#1 \textup{#2}.} #3
}
\theoremstyle{definition}
\newtheorem{definition}[lemma]{Definition}
\newtheorem*{definition*}{Definition}
\newtheorem{example}[lemma]{Example}
\theoremstyle{remark}
\newtheorem{remark}[lemma]{Remark}
\newcommand{\Hom}{\text{Hom}}
\newcommand{\pic}{\text{Pic}}
\newcommand{\fun}{\text{Fun}}
\newcommand{\C}{\mathcal{C}}
\newcommand{\1}{\mathbbm{1}}
\newcommand{\D}{\mathcal{D}}
\newcommand{\E}{\mathcal{E}}
\newcommand{\M}{\mathcal{M}}
\newcommand{\N}{\mathbb{N}}
\newcommand{\FPdim}{\text{\rm FPdim}}
\newcommand{\id}{\text{id}}
\newcommand{\Rep}{\text{Rep}}
\newcommand{\Vect}{\text{Vec}}
\newcommand{\kk}{\mathbbm{k}}
\newcommand{\sVec}{\text{sVec}}
\newcommand{\comment}[1]{}
\newcommand{\sRep}{\textrm{sRep}}
\newcommand{\Mod}{\textrm{Mod}}
\tikzset{commutative diagrams/every label/.append style={yshift=3pt}} 
\title[On braided simple extensions and braided non-semisimple near-group categories]{On braided simple extensions and braided non-semisimple near-group categories}
\author{Daniel Sebbag}
\address{Department of Mathematics, Ben-Gurion University of the Negev, Beer-Sheva, Israel}
\email{sebbagd@post.bgu.ac.il}
\date{\today}
\begin{document}

\begin{abstract}
We study simple extensions of pointed finite tensor categories, that is, tensor categories $\C$ admitting an abelian decomposition $\C \cong \D \oplus \mathcal{M}$ where $\D$ is a pointed tensor subcategory and $\mathcal{M}$ has a unique simple projective object. Such categories provide a natural generalization of near-group categories. Our results concern the braided case.

We prove that every non-degenerate braided non-semisimple near-group category is a braided simple extension of $\sRep(W\oplus W^*)$ with non-trivial braiding for which $\sRep(W)$ is Lagrangian. Moreover, any braided non-semisimple near-group category $\C$ arises canonically as an extension of such a category by $\Rep(G)$, where $G$ is the Picard group of a symmetric subcategory determined by the unique simple 
projective object of $\C$.
\end{abstract}

\maketitle
\noindent\textbf{Keywords:} finite tensor categories, near-group categories, non-semisimple categories, braided tensor categories

\noindent\textbf{MSC 2020:} 18M05, 18M15.

\section{Introduction}
Let $\D$ be a finite tensor category; that is, an abelian, $\kk$-linear, rigid monoidal category with a finite number of (isomorphism classes of) simple objects, where $\1$ is a simple object and each object has a projective cover. For a monoidal category $\C$, a category $\M$ is called a left $\C$-module category if there exists a bifunctor $\tilde\otimes: \C\times \M \to \M$ and a natural isomorphism 
$$
    m_{X,Y,M}:(X\otimes Y)\tilde\otimes M\to X\tilde\otimes (Y\tilde\otimes M)
$$
for $X,Y\in \C$ and $M\in \M$ which is compatible with the associativity constraint of $\C$ and $M\mapsto \1\tilde\otimes M$ is an autoequivalence of $\M$. 

A classical result in the theory of finite tensor categories is that if $\D$ has a fiber functor, then $\Vect$ is a semisimple $\D$-bimodule category, and $\C:=\D\oplus \Vect$ is also a $\D$-bimodule category (see \cite{EGNO}). A natural question that arises is: when can we define a monoidal structure on $\C$. To be more precise, we ask: 
\begin{question}
    For a finite tensor category $\D$ with a fiber functor $F:\D\to \Vect$, is there a category $\M$ which is equivalent to $\Vect$ as a $\D$-bimodule category, such that $\C:=\D\oplus \M$ has a structure of a finite tensor category (which is compatible with the $\D$-action on $\M$)?
\end{question}
When the answer to this question is yes, we say that $(\C,\D,\M)$ is a simple extension of $\D$. Usually, we will omit $\M$ and simply say that \textit{$\C$ is a simple extension of $\D$}. Notice that, by definition, if $\C$ is a simple extension of $\D$, then $\C$ has a simple projective object $Q$, the unique simple object in $\M$.

In fusion categories, there is a family of categories called \textit{near-group categories}, which were introduced in \cite{S1} (though classification of such categories appeared even earlier in \cite{TY}). A near-group category is a fusion category $\C$ with a unique non-invertible object $Q$. Denoting the group of simple invertible objects of $\C$ by $G = \{L_i\}$, then a key parameter in the structure of $\C$ is the parameter $r\in \N$ such that $Q\otimes Q\cong \oplus_{i}L_i \oplus rQ$. This near-group category is usually denoted by $\C(G,r)$, and we say that $\C$ has a fusion rule $(G,r)$.

There are many known results regarding near-group categories. For instance, in their work, \cite{TY} proved that every near-group category with fusion rule $(G,0)$ must be non-strict, that is, must have non-trivial associativity constraint (they also determine the structure constants of the possible associativity constraints in such categories). In addition, \cite{S2} classified braided near-group categories with fusion rule $(G,0)$, and their braid structures (including the structure of the underlying group $G$), and \cite{T} classified braided near-group categories with fusion rule $(G,r)$ for $r>0$. These results strongly rely on two factors:
\begin{itemize}
    \item The semisimplicity of near-group categories, which enables translating the diagrams defining (braided) fusion categories to equations describing the fusion rules $(G,r)$, due to their unique structure (as a fusion category with a unique non-invertible object).
    \item The classification of pointed fusion categories, as categories of the form $\Vect^\omega_G$ of $G$-graded vector spaces, with associativity constraint corresponding to a $3$-cocycle $\omega\in H^3(G,\kk^*)$.
\end{itemize}
A natural way to generalize near-group categories to the class of finite tensor categories is:
\begin{definition}
    A non-semisimple finite tensor category $\C$ is called a \textit{non-semisimple near-group category} if $\C$ is a simple extension of a non-semisimple pointed finite tensor category $\D$.
\end{definition}
Extending the notion from fusion categories, we say that a non-semisimple near-group category $\C$ has \textit{generalized fusion rule} $(G,r)$ if $G$ is the group of invertible objects of $\C$ (that is, the Picard group of $\D$, of which $\C$ is its simple extension) and $r$ is the number of copies of the unique simple projective object $Q$ in the tensor product $Q\otimes Q$.

A full classification of non-semisimple near-group categories is out of reach, and such a classification might require first a classification of pointed non-semisimple finite tensor categories, which doesn't exist yet. Another factor that makes this classification harder is the fact that in a non-semisimple near-group category, the tensor product of the unique non-invertible simple object $Q$ is not a direct sum of simple objects, but a direct sum of projective covers of simple objects, as we show in \cref{eq: Q times Q} in \cref{sec: braided simple projective extensions}. In particular, the diagrams defining a finite tensor category don't translate to equations of scalar parameters as in fusion near-group categories. A basic example of such a category is the category $\Rep(A''_{C_4})$ for $A''_{C_4}$ the Hopf algebra defined in \cite{M}\footnote{$A''_{C_4}$ is the Hopf algebra generated by a grouplike element $g$ of order $4$, a $(g,1)$-skew-primitive element $x$ such that $x^2=g^2-1$ and $xg=-gx$}, which is a simple extension of $\Rep(H_4)$ for $H_4$ the Sweedler's Hopf algebra.

In this paper, we wish to take a first step toward classifying non-semi-simple near-group categories, by focusing on \textit{braided} non-semisimple near-group categories, i.e. \textit{braided simple extensions} of braided pointed finite tensor categories, admitting a fiber functor. Such a classification should be easier, due to the classification of braided pointed finite tensor categories with a fiber functor, which is given in \cite{BN1}.

Formally, in this work we consider triplets of simple extensions $(\C,\D,\M)$ for which $\D$ is braided, $\M$ is equivalent as a $\D$-bimodule category to $\Vect$ and $\C$ is the simple extension of $\D$, such that $\C$ is braided and $\D$ is a braided tensor subcategory of $\C$. An example for such a category is given in \cite{GS}. On the other hand, there are braided finite tensor categories with simple extensions that are not braided - for instance  $\Rep(H_4)$ is braided but $\Rep(A''_{C_4})$ is not, because $A''_{C_4}$ has no quasi-triangular structure (see \cite{M}). 

In the fusion case, \cite{T} proves that there are braided near-group categories with fusion rule $(G,r)$ for $r>0$. In this paper, we prove that this fails in non-semisimple near-group categories. That is, we prove the following theorem:
\begin{introthm}\label{thm: non semisimple braided near-group categories have fusion rule 0}
    Let $\C$ be a non-semisimple braided near-group category with generalized fusion rule $(G,r)$, then $r=0$, and consequently $\C$  is weakly integral.
\end{introthm}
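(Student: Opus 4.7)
My plan is to first pin down the decomposition of $Q\otimes Q$ and then force $r=0$ using the braided structure.

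For every $g\in G$, the object $L_g\otimes Q$ is simple (since $L_g$ is invertible) and projective (since $Q$ is projective), so by uniqueness of the simple projective one has $L_g\otimes Q\cong Q$; dualising gives $Q^*\cong Q$ as well. Because $\C$ is a braided finite tensor category, it is unimodular, so every indecomposable projective $P_g$ has simple socle $L_g$. Writing $Q\otimes Q\cong\bigoplus_{g\in G} m_g P_g\oplus rQ$, the adjunction computation
\[
 m_g \;=\; \dim\Hom(L_g,Q\otimes Q) \;=\; \dim\Hom(L_g\otimes Q^*, Q) \;=\; \dim\Hom(Q,Q) \;=\; 1
\]
forces $m_g=1$ for every $g\in G$, and reduces the theorem to showing $r=0$.

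To rule out $r\geq 1$, suppose for contradiction that $r\geq 1$. Fix dual bases $\{\Delta_i\}_{i=1}^r\subset\Hom(Q,Q\otimes Q)$ and $\{\mu_j\}_{j=1}^r\subset\Hom(Q\otimes Q,Q)$ with $\mu_j\circ\Delta_i=\delta_{ij}\id_Q$. The braiding $c_{Q,Q}$ preserves isotypic components of $Q\otimes Q$ and restricts on the $rQ$-summand to an invertible matrix $C\in\mathrm{GL}_r(\kk)$; naturality of the twist applied to each $\Delta_i$ forces $C^2=\theta_Q^{-1} I$, so this piece of the braiding is semisimple. The strategy is to leverage the tension between this semisimple summand and the genuinely non-semisimple summand $\bigoplus_g P_g$ by analysing the triple tensor product. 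Using $L_h\otimes Q\cong Q$ and $P_g\otimes Q\cong \ell\cdot Q$ with $\ell=\mathrm{length}(P_e)$, one computes $Q^{\otimes 3}\cong r\bigoplus_{g\in G}P_g\oplus(\ell|G|+r^2)Q$. The braid relations then induce a $B_3$-action on $\Hom(Q,Q^{\otimes 3})$, of dimension $\ell|G|+r^2$, inside which the $r^2$-dimensional subspace spanned by the iterated insertions $(\Delta_i\otimes\id)\circ\Delta_j$ and $(\id\otimes\Delta_i)\circ\Delta_j$ must be stable. The main obstacle, and the heart of the proof, is to show that compatibility of this $B_3$-action with the twist equation $C^2=\theta_Q^{-1} I$ and with the non-split Yoneda extensions that build the $P_g$'s (reflecting $\ell\geq 2$) is inconsistent unless $r=0$; I expect the contradiction to reduce to a trace identity on the $rQ$-summand that cannot hold in a genuinely non-semisimple $\D$.

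Once $r=0$ is established, the equation $\FPdim(Q)^2=|G|\cdot\FPdim(P_e)$ is an integer, so $\FPdim(X)^2\in\mathbb{Z}$ for every simple object $X$ of $\C$, which is exactly the weak integrality assertion.
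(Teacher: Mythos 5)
Your opening reduction is essentially correct: $L_g\otimes Q\cong Q$, $Q^*\cong Q$, and each $P_g$ occurs with multiplicity one in $Q\otimes Q$ (the paper obtains this from $\dim\Hom(Q\otimes Q,L_g)=\dim L_g=1$ together with projectivity of $Q\otimes Q$; your socle variant needs care, since a braided finite tensor category need not be unimodular --- $\Rep(H_4)$ with its triangular structure is braided and not unimodular --- though this does not change the multiplicity count here). The weak integrality deduction at the end is also fine once $r=0$ is known. The problem is that the core of the theorem is not proved: everything after ``suppose for contradiction that $r\geq 1$'' is a description of a strategy rather than an argument. You set up a $B_3$-action on $\Hom(Q,Q^{\otimes 3})$ and then state that ``the heart of the proof'' is an inconsistency which you ``expect'' to reduce to a trace identity; no such identity is derived. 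Moreover two ingredients are shaky: the twist $\theta_Q$ you invoke presupposes a ribbon structure that a braided finite tensor category need not carry, and nothing in your setup uses the non-semisimplicity of $\D$ in an essential way --- it must, because in the fusion setting Thornton \cite{T} shows that braided near-group categories with $r>0$ do exist, so no computation confined to the semisimple $rQ$-summand of $Q\otimes Q$ can yield the contradiction on its own.

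For comparison, the paper's argument is short and never touches $Q^{\otimes 3}$. Writing $s_{Q,L_g}=\lambda_g\,\id$, one computes the double braiding $s_{Q\otimes Q,L_g}$ in two ways: the hexagon gives the scalar $\lambda_g^2$ on all of $Q\otimes Q\otimes L_g$, while naturality gives the scalar $\lambda_g$ on the $rQ$-summand; if $r>0$ this forces $\lambda_g=1$ for every $g$, so every simple object of $\D$ centralizes $Q$, and since the subcategory $\E_+$ of objects centralizing $Q$ is a Serre tensor subcategory, $\D$ is symmetric. Deligne's theorem then identifies the non-semisimple symmetric $\D$ with $\sRep$ of a supergroup, which contains a fermion $S$ with $S\otimes S=\1$ and $c_{S,S}=-\tau_{S,S}$; writing $c_{S,Q}=\lambda\tau_{S,Q}$, the hexagon for $c_{S\otimes S,Q}$ forces $\lambda^2=1$, while naturality against $c_{S,S}$ forces $\lambda=-\lambda$, a contradiction. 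To salvage your approach you would need to supply an argument of comparable force where your ``trace identity'' currently stands, and it would have to exploit non-semisimplicity as decisively as the fermion does here.
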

In their work (\cite{BN2}, \cite{BN3}), the authors defined the notion of a central exact sequence of finite tensor categories as an exact sequence $\E\xrightarrow{i} \C\xrightarrow{F} \D$ of finite tensor categories with exact tensor functors $i:\E\to \C$ and $F:\C\to \D$ such that $i$ is a full embedding, $F$ is surjective and \textit{normal}, meaning that for every $Y\in \C$ there exists a subobject $X'\in \C$ such that $F(X')$ is the maximal subobject of $F(X)$ that is a direct sum of copies of $\1_\D$, and $F\circ i$ is a fiber functor of $\E$. Such an exact sequence is called braided if the categories $\E,\C,\D$ and the functors $i, F$ are braided, and it is called a \textit{modularization exact sequence} if $\D$ is modular (i.e, non-degenerate). In this paper, we prove that braided non-semisimple near-group categories are "extensions" of \textit{non-degenerate} braided non-semisimple near-group categories by symmetric fusion categories. Formally, we prove:

\begin{introthm}\label{thm: exact sequence of braided near-group categories}
    Let $\C$ be a braided non-semisimple near-group category, then there exist a non-degenerate braided non-semisimple near-group category $\D$ and a finite group $G$ such that $\C$ fits in a modularization exact sequence:
    $$
        \Rep(G)\xrightarrow{i}\C\xrightarrow{F}\D.
    $$
\end{introthm}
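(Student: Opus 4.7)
The plan is to construct the desired exact sequence as the braided modularization of $\C$. Set $\E := Z_2(\C)$, the Müger center of $\C$. I will show that $\E$ is a pointed Tannakian subcategory, so by Deligne's theorem $\E \simeq \Rep(G)$ for a finite group $G$, and that the de-equivariantization $\D := \C_\E$ is itself a non-degenerate braided non-semisimple near-group category. The desired sequence is then the modularization $\Rep(G) \xrightarrow{i} \C \xrightarrow{F} \D$.

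The first step is to show that the unique simple projective $Q$ does not lie in $Z_2(\C)$. If $Q \in Z_2(\C)$, then $Q \otimes Q \in Z_2(\C)$; by \cref{thm: non semisimple braided near-group categories have fusion rule 0} and the decomposition of $Q \otimes Q$ recalled in \cref{eq: Q times Q}, each projective cover $P(L_i)$ appearing there lies in $Z_2(\C)$. Since $Z_2(\C)$ is closed under subquotients and under tensor products with invertibles, the entire pointed subcategory of $\C$ must be contained in $Z_2(\C)$, forcing $Z_2(\C) = \C$. But a non-semisimple near-group category with a fiber functor cannot be symmetric: the structure theory of symmetric finite tensor categories admitting a fiber functor (after Deligne and Etingof--Gelaki) forbids a unique simple projective coexisting with a non-projective unit. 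Hence $\E$ is contained in the pointed subcategory of $\C$, is pointed symmetric braided and admits a fiber functor, so $\E \simeq \Rep(G)$ with $G$ the Picard group of $\E$.

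Given the Tannakian inclusion $\Rep(G) \hookrightarrow \C$, I apply the theory of braided central exact sequences of \cite{BN2, BN3} to form $\D := \C_\E$ and the central exact sequence $\Rep(G) \xrightarrow{i} \C \xrightarrow{F} \D$; non-degeneracy of $\D$ is automatic from $\E$ being the full Müger center. It remains to verify that $\D$ is a non-semisimple near-group category. Non-semisimplicity: $F$ is a normal surjection with exact adjoints, so it preserves projective covers, and therefore $\1_\D = F(\1_\C)$ remains non-projective. Near-group structure: I would argue that the image of the pointed subcategory of $\C$ under $F$ is a pointed tensor subcategory of $\D$, that $F(Q)$ is simple projective, and that it is the unique simple non-invertible object of $\D$. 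Projectivity of $F(Q)$ is preserved by adjointness, while uniqueness uses that every simple non-invertible object of $\D$ lifts through the normal adjoint $i^*$ to a simple non-invertible object of $\C$, which by hypothesis must be $Q$.

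The main obstacle is the verification that $F(Q)$ is in fact simple. In the non-semisimple setting, Frobenius--Perron dimensions alone do not determine simplicity of images under $F$, so one must work with projective covers and Jordan--Hölder multiplicities. Since $L \otimes Q \cong Q$ for every invertible $L$ in $\C$ (by uniqueness of the non-invertible simple), the naive orbit argument degenerates, and one must compute the $\E$-equivariant structures on $Q$ directly, using the explicit decomposition of $Q \otimes Q$ together with the compatibility of the braiding with the inclusion $\E \hookrightarrow \C$, in order to conclude that $F(Q)$ is a single simple summand rather than a multiple $n \cdot Q'$ of some simple $Q' \in \D$.
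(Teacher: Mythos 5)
Your overall strategy coincides with the paper's: identify the M\"uger center $\C'$ (which the paper proves equals the subcategory $\E_+$ of objects centralizing $Q$, \cref{prop: E_+ is central in C}), show that $Q\notin\C'$, de-equivariantize by a Tannakian central subcategory, and check that the quotient is a non-degenerate near-group category. However, two of your steps have genuine gaps. The first is the identification $Z_2(\C)\simeq\Rep(G)$: ``pointed, symmetric, and admits a fiber functor'' does not imply Tannakian, since $\sVec$ and more generally $\sRep(W)$ are pointed symmetric categories admitting (non-symmetric) fiber functors which are not of the form $\Rep(G)$. Moreover, $Z_2(\C)=\E_+$ is a Serre subcategory of the non-semisimple category $\D$, so your claim that it lies in the fusion subcategory generated by invertibles is itself unproved; a priori the center could contain indecomposable non-simple objects, or a fermion $S$ with $c_{S,S}=-\tau_{S,S}$. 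Excluding the fermion requires exactly the argument used in the paper's proof of \cref{thm: non semisimple braided near-group categories have fusion rule 0} (an invertible $S$ with $c_{S,S}=-\tau_{S,S}$ and $S\otimes Q\cong Q$ forces $\lambda=-\lambda$). The paper sidesteps the semisimplicity issue by de-equivariantizing only by $\Rep(\pic(\E_+))$, the fusion subcategory generated by the invertible central objects, and then proving non-degeneracy of the quotient \emph{directly} in \cref{prop: de-equivariantization to non-degenerate}; with your setup, ``non-degeneracy is automatic'' would require knowing that the algebra you quotient by exhausts the entire M\"uger center, which is precisely what you have not established.

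The second gap is the one you flag yourself: the verification that the quotient is a near-group category is left open. Note that the paper neither proves nor needs that $F(Q)$ is simple. In \cref{prop: de-equivariantization of near-group is near-group} it instead classifies all simple objects of $\C_H$ by an induction on Loewy length together with the adjunction $\Hom_A(X\otimes A,Y)\cong\Hom_{\C}(X,\bar{Y})$, showing that each is either invertible or a projective simple quotient of $F(Q)$ with underlying object $Q^{\oplus m}$, and that the latter is unique; a decomposition $F(Q)\cong Y^{\oplus k}$ is harmless. Reorienting your argument toward classifying the simples of the quotient, rather than toward the simplicity of $F(Q)$, removes the obstacle you describe and recovers the paper's proof.
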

The proof of this theorem is constructive, and $(\D,G)$ are determined by $\C$. As a result of this theorem, classifying braided non-semisimple near-group categories restricts to classification of non-degenerate braided non-semisimple near-group categories.

In their work, \cite{GS} classified non-degenerate braided finite tensor categories with a Lagrangian subcategory braid equivalent to $\sRep(W)$ (the category of supermodules of the exterior algebra over a purely odd supervector space $W$), and they proved that some of these categories are simple braided extensions of $\sRep(W\oplus W^*)$ (though they do not use the term "braided simple extension"). In this paper, we prove that these examples are, in a way, unique. To be exact, we prove:
\begin{introthm}\label{thm: non degenerate braided non semisimple near-group categories}
    Every \textbf{non-degenerate} non-semisimple braided near-group category $C$ is a braided simple extension of a category $\D$  braid equivalent to $\sRep(W\oplus W^*)$, with $\sRep(W)$ a Lagrangian subcategory, for some purely odd supervector space $W$.
\end{introthm}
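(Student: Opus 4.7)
My plan is to combine \cref{thm: non semisimple braided near-group categories have fusion rule 0} with the classification of braided pointed non-semisimple finite tensor categories admitting a fiber functor from \cite{BN1}, and then to leverage the non-degeneracy of $\C$ via Müger-centralizer considerations to force $\D$ into the shape $\sRep(W\oplus W^*)$.

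First, by \cref{thm: non semisimple braided near-group categories have fusion rule 0} the generalized fusion rule of $\C$ is $(G,0)$, so $Q\otimes Q\cong\bigoplus_g n_g P(L_g)$ and $\C$ is weakly integral. Writing $\C=\D\oplus\M$, the subcategory $\D$ is braided, pointed, non-semisimple, and admits a fiber functor by the standing hypothesis on simple extensions. Applying the classification of \cite{BN1}, I would present $\D$ as determined by its abelian Picard group $G$, a super vector space with nonzero odd part (since $\D$ is non-semisimple), and a bicharacter controlling the braiding; the Müger center $Z_2(\D)$ is a pointed symmetric subcategory, and by Deligne's theorem combined with pointedness it is of the form $\Rep(G_0)\boxtimes \sRep(W)$ for a subgroup $G_0\leq G$ and some purely odd super vector space $W$.

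Next I would use the non-degeneracy of $\C$. The Müger centralizer $\D'_\C$ of $\D$ in $\C$ intersects $\D$ in exactly $Z_2(\D)$, and satisfies an FPdim identity relating $\FPdim(\D)$, $\FPdim(\D'_\C)$, and $\FPdim(\C)$ (the non-semisimple analogue of Müger's centralizer theorem, due to Shimizu). Combined with $\FPdim(\C)=\FPdim(\D)+\FPdim(Q)^2$ and an explicit tracking of how $Q$ centralizes the invertibles of $\D$ (through the projective-cover decompositions of $Q\otimes L_g$ and of $Q\otimes Q$), this should show that $\D'_\C$ cannot carry any $\Rep$-type symmetric piece beyond parity: the group part $G_0$ must collapse to $\mathbb{Z}/2$, forcing $G$ itself to be the parity group, and $\sRep(W)\subset \D$ is Lagrangian, i.e.\ $\FPdim(\sRep(W))^2=\FPdim(\D)$.

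Once $\D$ is identified as a pointed non-degenerate braided category containing $\sRep(W)$ as a Lagrangian subcategory, the classification in \cite{BN1} forces $\D$ to be braid equivalent to the canonical non-degenerate ``double'' $\sRep(W\oplus W^*)$, with $\sRep(W)$ embedded as its canonical Lagrangian subcategory. The main obstacle will be the centralizer/FPdim calculation in the second step: combining the non-semisimple Müger centralizer identity with the projective-cover structure of $Q\otimes L_g$ and $Q\otimes Q$ to force the Picard group of $\D$ down to $\mathbb{Z}/2$ and to establish the Lagrangian relation $\FPdim(\sRep(W))^2=\FPdim(\D)$, which is precisely where the non-degeneracy of $\C$ must enter in an essential way.
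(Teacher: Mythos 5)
Your overall strategy points in the right direction---reduce to fusion rule $(G,0)$, then use the non-degeneracy of $\C$ together with Shimizu's centralizer formula and the braiding of $Q$ against the invertibles to pin down $\D$---but the proposal contains errors that would derail it as written. First, $\D$ is \emph{not} non-degenerate: the paper proves that any braided finite tensor category admitting a braided simple extension is degenerate, and here $\D$ turns out to be slightly degenerate, with $(\D)'_{\D}\cong\sVec$. Relatedly, the $\sRep(W)$ in the statement is a \emph{Lagrangian} (self-centralizing) subcategory; it is symmetric but not central, so it does not live inside the M\"uger center of $\D$, where your plan proposes to find it---that search would only return $W=0$, since $(\D)'_{\D}\cong\sVec$. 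The correct dimension relation is $\FPdim(\sRep(W))^2=\FPdim(\C)=2\FPdim(\D)$ (Lagrangian in the non-degenerate ambient category $\C$), not $\FPdim(\sRep(W))^2=\FPdim(\D)$.

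Second, the step you flag as ``the main obstacle''---forcing $\pic(\D)=\mathbb{Z}/2$---is exactly where the real work lies, and your sketch omits the ingredient that makes it go through. In \cref{prop: non-degenerate near-group category has picard group Z2} the paper first gets $\FPdim(\D')=2$ from \cref{eq: FPdim(D')} together with $\E_+=\C'=\Vect$, then computes $s_{Q\otimes Q,L_g}=\lambda_g^2\,\id$ and uses that $Q\otimes Q\cong\bigoplus_h P_h$ contains $P_e$, whose socle is $\1$ because $\C$ is non-degenerate and hence \emph{unimodular}; restricting to the socle yields $\lambda_g^2=1$, so every $L_g$ centralizes all projectives and therefore all of $\D$, forcing $\pic(\D)$ into a subcategory of Frobenius--Perron dimension $2$. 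Without the unimodularity input, tracking $Q\otimes L_g$ and $Q\otimes Q$ only tells you that each $\lambda_g$ is a root of unity of order dividing the order of $L_g$. Finally, for the endgame the paper does not argue through \cite{BN1}: it produces a Lagrangian subcategory $\D_0\supset\sVec$ of $\C$, identifies $\D_0\cong\sRep(W)$ via Deligne's theorem \cite{D}, and then invokes the classification of \cite{GS} of non-degenerate braided finite tensor categories with such a Lagrangian subcategory; the presentation of $\D$ as a pointed braided category with $\pic(\D)=\mathbb{Z}/2$ does not by itself yield the $W\oplus W^*$ structure with $\sRep(W)$ Lagrangian.
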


Notice that $\D$ is equivalent to $\sRep(W\oplus W^*)$, with a \textbf{non-symmetric} braid structure (as the theorem states, $\sRep(W)$ is the maximal symmetric subcategory of $\D$, i.e. the maximal subcategory in which the restriction of the braiding is symmetric).

The combination of \cref{thm: exact sequence of braided near-group categories} and \cref{thm: non degenerate braided non semisimple near-group categories} gives a complete description of the class of braided non-semisimple near-group categories. This classification give two results:
\begin{introcorollary}
    Every braided non-semisimple near-group category is not integral.
\end{introcorollary}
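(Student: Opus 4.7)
The plan is to combine Theorems 2 and 3 to reduce the corollary to a single Frobenius--Perron dimension computation in the non-degenerate case.

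First, I would invoke Theorem 2 to reduce to the non-degenerate case. Any braided non-semisimple near-group category $\C$ fits in a modularization exact sequence $\Rep(G) \to \C \to \D$ with $\D$ a non-degenerate braided non-semisimple near-group category. The description of $\C$ as the $G$-equivariantization of $\D$ expresses the simples of $\C$ as pairs (orbit on simples of $\D$, irreducible representation of the stabilizer) with Frobenius--Perron dimensions equal to the corresponding products, and this shows $\C$ is integral if and only if $\D$ is. Hence it suffices to prove the corollary for non-degenerate $\D$.

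Next, I would apply Theorem 3 to write $\D$ as a braided simple extension $\D = \E \oplus \M$, where $\E$ is braid equivalent to $\sRep(W\oplus W^*)$ and $\sRep(W) \subset \D$ is a Lagrangian subcategory, for some purely odd supervector space $W$. Non-semisimplicity of $\D$ forces $\dim W = n \geq 1$, since $n=0$ would give $\E \cong \sVec$ and a semisimple (Ising-type) $\D$. Realizing $\sRep(V)$ for $V$ purely odd of dimension $m$ as the representation category of the bosonization of $\Lambda(V)$, a Hopf algebra of dimension $2^{m+1}$, yields
\[
    \FPdim(\sRep(W)) \;=\; 2^{n+1}, \qquad \FPdim(\E) \;=\; 2^{2n+1}.
\]

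Finally, since $\sRep(W)$ is Lagrangian in the modular category $\D$, the Müger self-centralizing condition gives $\FPdim(\D) = \FPdim(\sRep(W))^2 = 2^{2n+2}$, and the simple extension formula for the unique simple projective object $Q \in \M$ then gives
\[
    \FPdim(Q)^2 \;=\; \FPdim(\D) - \FPdim(\E) \;=\; 2^{2n+2} - 2^{2n+1} \;=\; 2^{2n+1},
\]
so $\FPdim(Q) = 2^{n}\sqrt{2}$ is irrational for every $n \geq 1$. Thus $\D$ has a simple object with non-integer Frobenius--Perron dimension, $\D$ is not integral, and by the reduction step neither is $\C$.

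The main technical point will be the reduction step: while the numerical identity $\FPdim(\C) = |G|\FPdim(\D)$ is immediate from the exact sequence, transferring integrality requires the sharper fact that each simple of $\C$ has Frobenius--Perron dimension equal to an integer multiple of that of a simple of $\D$, which I would extract from the explicit equivariantization description of a braided modularization exact sequence.
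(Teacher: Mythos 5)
Your overall architecture matches the paper's: reduce via Theorem 2 to the non-degenerate case, then use Theorem 3 to compute the Frobenius--Perron dimension of the simple projective object. Your computation in the non-degenerate case is correct, and obtaining $\FPdim = 2^{2n+2}$ from the Lagrangian condition $\FPdim(\sRep(W))^2$ is a harmless variant of the paper's route (which uses $\FPdim(\bar{\C}) = 2\FPdim(\bar{\D})$ with $r=0$); both give $\FPdim(Q_0)^2 = 2^{2\dim W+1}$, hence $\FPdim(Q_0) = \sqrt{2}\cdot 2^{\dim W}$.

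The gap is exactly where you flagged it: the reduction step. You propose to transfer integrality by describing $\C$ as the $G$-equivariantization of the non-degenerate quotient and invoking the orbit/stabilizer description of its simple objects. That description is a fusion-category fact, and the identification $(\C_G)^G \simeq \C$ fails for finite tensor categories; the paper itself records the counterexample of the Sweedler Hopf algebra $H$, where $\Rep(H)_{\mathbb{Z}_2} = \Vect$ but $(\Vect)^{\mathbb{Z}_2} = \Rep(\mathbb{Z}_2) \not\simeq \Rep(H)$. So you cannot recover the simples of $\C$ (in particular $Q$) from those of the quotient by equivariantization, and the claimed ``integral iff integral'' equivalence has no justification as stated. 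The transfer should instead go in the de-equivariantization direction, which is what the paper does: the free module functor $F:\C\to\C_H$ is a surjective tensor functor, hence preserves Frobenius--Perron dimensions by [EO], and it sends $Q$ to a nonzero direct sum of copies of the unique simple projective $Q_0$ of $\C_H$ (this is established in the proof that the de-equivariantization of a near-group category is again a near-group category, together with the description of its simple objects). Hence $\FPdim(Q)$ is a positive integer multiple of $\sqrt{2}\cdot 2^{\dim W}$ and is therefore irrational; no equivariantization statement is needed.
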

\begin{introthm}\label{thm: parameters of braided non-semisimple near-group category}
    Every braided non-semisimple near-group category $\C$ corresponds to a pair $(G,n_p)$ such that:
    \begin{enumerate}
        \item $G$ is a group of order $2^{n_g}$ with a central element of order $2$.
        \item $n_p\in \N$ and for $P_{\1}$ the projective cover of $\1_{\C}$, $\dim(P_e) = 2^{n_p}$ as a vector space.
        \item $n_p+n_g$ is an odd number.
    \end{enumerate}
\end{introthm}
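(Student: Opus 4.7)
The plan is to combine the results of Theorems 1, 2, and 3 with a dimension and parity count.

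First, I would apply Theorem 2 to produce a modularization exact sequence $\Rep(H) \xrightarrow{i} \C \xrightarrow{F} \D'$ with $\D'$ a non-degenerate braided non-semisimple near-group, then apply Theorem 3 to realize $\D'$ as a braided simple extension of $\sRep(W\oplus W^*)$ with $\sRep(W)$ Lagrangian, for some purely odd $W$ of dimension $m$. Using that $\sRep(W)$ has two simples with projective covers of $\kk$-dimension $2^m$, one gets $\FPdim(\sRep(W)) = 2^{m+1}$, hence by the Lagrangian condition $\FPdim(\D') = 2^{2m+2}$ and $\FPdim(\C) = |H|\cdot 2^{2m+2}$.

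Second, I would compute $\FPdim(\C)$ intrinsically. Let $G$ be the Picard group of $\C$, $P_\1$ the projective cover of $\1_\C$, and $Q$ the unique simple projective of $\C$. Since the pointed base $\D_\C \subset \C$ is pointed, $P_L \cong L \otimes P_\1$ shares the same $\kk$-dimension as $P_\1$ for every $L \in G$. Theorem 1 gives $r=0$, so $Q\otimes Q \cong \bigoplus_{L\in G} P_L$ and $\FPdim(Q)^2 = |G| \cdot \dim_\kk(P_\1)$. Summing over the $|G|+1$ simples yields $\FPdim(\C) = 2\,|G|\dim_\kk(P_\1)$; equating with the previous expression gives $|G|\dim_\kk(P_\1) = |H|\cdot 2^{2m+1}$.

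Third, I would show that $|G|$ and $\dim_\kk(P_\1)$ are individually powers of $2$, and that $G$ contains a central order-$2$ element. The pointed base $\D_\C$ is a braided pointed finite tensor category admitting a fiber functor, hence by \cite{BN1} it is equivalent to $\Rep(\kk G \ltimes B)$ for a Nichols algebra $B$ carrying a compatible $R$-matrix, and $\dim_\kk(P_\1) = \dim_\kk(B)$. The compatibility with the braided simple extension structure of $\C$ (as developed in the earlier sections of the paper), combined with the induced surjection $F\colon G\twoheadrightarrow G(\D')\cong \mathbb{Z}/2$ and its kernel $\widehat{H^{\mathrm{ab}}}$, forces $G$ to be a $2$-group, forces $B$ to be of exterior type (so $\dim_\kk(B) = 2^{n_p}$), and identifies a distinguished central $\mathbb{Z}/2\subset G$ as the preimage of the parity element under $F$. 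This establishes (1) and (2), and defines $n_g$ and $n_p$.

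Finally, (3) is forced by non-integrality. The Corollary gives $\FPdim(Q)\notin\mathbb{Z}$ while $\FPdim(Q)^2 = |G|\dim_\kk(P_\1) = 2^{n_g+n_p}\in\mathbb{Z}$; hence $2^{n_g+n_p}$ is not a perfect square, forcing $n_g+n_p$ to be odd. The principal obstacle is the third step — extracting the $2$-primary structure of the pointed base and the distinguished central order-$2$ element from the interaction of the $R$-matrix with the simple-extension structure and the exact sequence of Theorem 2 — and this is where the structural work of the preceding sections of the paper does the heavy lifting; once it is in hand, the remaining dimension and parity bookkeeping is immediate.
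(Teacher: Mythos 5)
Your overall strategy --- feeding \cref{thm: non semisimple braided near-group categories have fusion rule 0}, \cref{thm: exact sequence of braided near-group categories} and \cref{thm: non degenerate braided non semisimple near-group categories} into a dimension-and-parity count --- is the same as the paper's, and your final parity step (that $\FPdim(Q)^2=2^{n_g+n_p}$ together with $\FPdim(Q)\notin\mathbb{Z}$ forces $n_g+n_p$ to be odd) is correct and essentially the argument in the text. The genuine gap is your third step. From your two computations of $\FPdim(\C)$ you only obtain $|G|\cdot\dim_{\kk}(P_{\1})=|H|\cdot 2^{2m+1}$, where $|H|=|\pic(\E_+)|$; since nothing at that point tells you that $|H|$ is a power of $2$, this identity does not split into $|G|=2^{n_g}$ and $\dim_{\kk}(P_{\1})=2^{n_p}$. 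The detour you propose to close this --- invoking the classification of pointed braided finite tensor categories from \cite{BN1} and arguing that the $R$-matrix ``forces'' $G$ to be a $2$-group and the Nichols algebra to be of exterior type --- is not an argument as written (you flag it yourself as the principal obstacle), and no such classification input is used by the paper.

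What the paper does instead is apply the Frobenius--Perron-dimension-preserving property of the surjective braided tensor functor $F\colon\C\to\C_H$ to the object $Q$ itself, rather than only to the total dimension of the category: in the proof of \cref{crl: braided non-semisimple near-group category is not integral} one sees that $F$ carries $Q$ to the simple projective object $Q_0$ of the non-degenerate quotient, whose Frobenius--Perron dimension is $\sqrt{2}\cdot 2^{\dim W}$ by \cref{thm: non degenerate braided non semisimple near-group categories}; hence $\FPdim(Q)=\sqrt{2}\cdot 2^{l}$ for some $l\in\N$. Combined with \cref{eq: Q times Q} and $r=0$, this gives $|G|\cdot\FPdim(P_e)=\FPdim(Q)^2=2^{2l+1}$, a power of $2$; since both factors are positive integers ($\D$ is integral), unique factorization immediately yields $|G|=2^{n_g}$ and $\dim_{\kk}(P_e)=2^{n_p}$ with $n_g+n_p=2l+1$ odd. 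The central element of order $2$ is then supplied by \cref{prop: central element of order 2 in pic(D)} (or, more cheaply, by Cauchy's theorem, since $\pic(\D)$ is a nontrivial abelian $2$-group). If you wish to repair your version, the missing ingredient is precisely this direct determination of $\FPdim(Q)$ from the image of $Q$ under the modularization functor, which requires identifying $F(Q)$ with the simple projective object of $\C_H$; once that is in place, your remaining bookkeeping goes through unchanged.
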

This result is a slightly weaker version of the result given in \cite{S2} concerning braided (fusion) near-group categories. In particular, this Theorem gives a categorical proof to the result of \cite{M} stated above that there is no braided simple extension to $\Rep(H_4)$, in $\Rep(H_4)$ we have $n_g+n_p=2$.

\subsection{Structure of the paper}
In \cref{sec: Preliminaries}, we give the necessary preliminaries, including results concerning the M\"uger centralizer in braided finite tensor categories, which will be used extensively.

In \cref{sec: braided simple projective extensions} we consider general braided simple extensions $(\C,\D,\M)$, and develop some basic tools that will be required to the study of braided non-semisimple near-group categories. In \cref{sec: simple extensions of pointed FTC} we focus on braided non-semisimple near-group categories.

\subsection*{Acknowledgments} I'm grateful to Inna Entova-Aizenbud for helpful discussions. This research was supported by the Israel Science Foundation grant ISF No. 1362/23 (PI: Inna Entova-Aizenbud).

\section{Preliminaries}\label{sec: Preliminaries}
In this section, we recall some basic background on finite tensor categories and braiding of tensor categories. Throughout this paper, we work over $\kk$, an algebraically closed field of characteristic zero, and all categories are assumed to be abelian and $\kk$-linear. 

    \subsection{Notation and terminology} 
    By a finite tensor category $\C$, we mean a $\kk$-linear, abelian, monoidal category with finitely many simple objects (up to isomorphism), such that each simple object has a projective cover, and $\1$ is a simple object. Throughout this paper, we assume all the tensor categories have trivial unit constraints, but not necessarily trivial associativity constraint. $\C$ is called \textit{semisimple} if every object is a direct sum of simple objects.

    For an abelian category $\C$, a full subcategory $\D\subset \C$ is called a \textit{Serre subcategory} if for every short exact sequence $0\to X\to Y\to Z\to 0$ in $\C$, $Y\in \D$ if and only if $X,Z\in \D$. In particular, a Serre subcategory is closed under taking subobjects, quotients and extensions.

    In a finite tensor category, every object $X$ has a socle filtration $0=X_0\subset X_1 \subset ... \subset X_n=X$ in which $X_{i+1}/X_i$ is the sum of simple subobjects of $X/X_i$. For $X\in \C$, we define the \textit{Loewy length} to be the length of the socle filtration of $X$.  

    A tensor functor $F:\C\to \D$ is called \textit{surjective} if every $Y\in \D$ is a subquotient of $F(X)$ for some $X\in \C$. Given a finite tensor category $\C$, a \textit{fiber functor} is an exact faithful functor $F:\C\to \Vect$ such that $F(\1) = \kk$ with a family of isomorphisms $\{J_{X,Y}:F(X)\otimes F(Y)\to F(X\otimes Y)|X,Y\in \C\}$ which satisfy the axioms of a monoidal functor.

    By definition, a finite tensor category $\D$ has a simple extension $\C$ if $\D$ has a bimodule category which is semisimple with a unique simple object, in other words, if $\D$ has a fiber functor. This implies that $\D$ is equivalent (as a monoidal category) to a category of representations of some Hopf algebra, thus has trivial associativity constraint.

    \subsection{Braiding, symmetry, twists and M\"uger centralizer}
        A tensor category $\C$ is called \textit{braided} if there exist isomorphisms $\{c_{X,Y}:X\otimes Y \to Y\otimes X\}_{X,Y\in \C}$ such that the following holds for objects $X,Y,Z\in \C$ and morphisms $\psi:X\to Z$:
        \begin{equation}\label{eq: composition of braiding}
            c_{X\otimes Y, Z} = a_{Z,X,Y}\circ(c_{X,Z}\otimes \id_Y)\circ a^{-1}_{X,Z,Y}\circ(\id_X\otimes c_{Y,Z})\circ a_{X,Y,Z} ,
        \end{equation}
        \begin{equation}\label{eq: naturallity of braiding}
            (\id_Y \otimes \psi)\circ c_{X,Y} = c_{Z,Y}\circ (\psi\otimes \id_Y)
        \end{equation}
        for $a$ the associativity constraint of $\C$. For more details, see for instance \cite{EGNO}.

    For a braided finite tensor category $\C$, let $s$ be the squared braiding, i.e., $s_{X,Y}:=c_{Y,X}\circ c_{X,Y}$ for every $X,Y\in\C$. Recall that two objects $X,Y\in{\C}$ {\em centralize} each other if $s_{X,Y}= \id_{X\otimes{Y}}$, and that the {\em centralizer} $\D'$ of a full tensor subcategory $\D\subseteq\C$ is the full subcategory of $\C$ consisting of all objects which centralize every object of $\D$ (see, e.g., \cite{DGNO}). The notion of M\"uger's centralizer enables us to distinguish between different types of braided finite tensor categories. For finite braided tensor categories $\D\subset\C$ we say that:
    \begin{itemize}
        \item $\D$ is symmetric if $D\subseteq \D'$. 
        \item $\D$ is Lagrangian if $\D=\D'$.
        \item $\D$ is central in $\C$ if $\D'= \C$ (thus a central category is always symmetric).
        \item $\C$ is non-degenerate if $\C'=\Vect$ (equivalently, $\Vect$ is a Lagrangian subcategory of $\C$).
    \end{itemize}

    In general, for $X,Y\in \C$ centralizing each other we have $s_{X,Y}=\id_{X\otimes Y} \Leftrightarrow s_{Y,X}=\id_{Y\otimes X}$. 
    
    By \cite{Sh}, we have in general for $\D\subset \C$
    \begin{equation}\label{eq: shimizu}
        \FPdim(\D)\FPdim(\D')=\FPdim(\C)\FPdim(\C'\cap \D)
    \end{equation}
    When considering $\D\subset\C$, the centralizer of $\D$ in $\D$ is not necessarily its centralizer in $\C$. When needed, to avoid confusion, we will denote the centralizer of $\D$ in $\D$ by $(\D)'_{\D}$.
    
    This also applies for three categories $\E\subset\D\subset\C$. Hence $(\E)'_\D$ might differ from $(\E)'_\C$ or $(\E)'_\E$. In all cases, the centralizer in the largest category will always be denoted without the subscript, i.e., in this case, $(\E)'_\C$ will simply be denoted as $\E'$. Clearly, if $\E\subset \D\subset \C$ then $\E'\subset (\E)'_\D\subset (\E)'_\E$.

    For any $X,Y,Z\in \C$ we have the following equality, which is a direct calculation using the definition of braiding:
    \begin{equation} \label{eq: squared braiding on (X times Y) times Z}
        s_{X\otimes Y, Z} = a^{-1}_{X,Y,Z}\circ (\id_X\otimes c_{Z,Y}) \circ a_{X,Z,Y}\circ (s_{X,Z}\otimes \id_Y) \circ a^{-1}_{X,Z,Y}\circ (\id_X\otimes c_{Y,Z})\circ a_{X,Y,Z} 
    \end{equation}
    \begin{equation} \label{eq: squared braiding on X times (Y times Z)}
        s_{X, Y\otimes Z} = a_{X,Y,Z}\circ (c_{Y,X}\otimes \id_Z) \circ a^{-1}_{Y,X,Z}\circ (\id_Y\otimes s_{X,Z}) \circ a_{Y,X,Z}\circ (c_{X,Y}\otimes \id_Z)\circ a^{-1}_{X,Y,Z}
    \end{equation}

    \subsection{De-equivariantization}
    In this subsection, we briefly recall the notion of de-equivariantization and some results which we will use in the classification of braided non-semisimple near-group categories. For more details, see \cite{EGNO}. 
    
    For a finite tensor category $\C$, and an algebra object $A\in \C$, $\Mod_\C(A)$ is the category of \textit{right} $A$-modules in $\C$. When $\C$ is braided and $A\in \C'$, the category $\Mod_\C(A)$ becomes a finite tensor category, with tensor product $\otimes_A$, and the free module functor $F:\C\to \Mod_\C(A)$ given by $F(X) = X\otimes A$ is a surjective tensor functor. 
    
    For $X,Y\in \Mod_{\C}(A)$, we denote by $\Hom_A(X,Y)$ the vector space of $A$-module morphisms $f:X\to Y$ which are morphisms in the category $\C$. Notice that in general, we don't have $\Hom_A(X,Y) = \Hom_{\C}(X,Y)$. For $X\in \C$ and $Y\in \Mod_\C(A)$ we do have:
    $$
        \Hom_A(X\otimes A,Y) = \Hom_\C(X,\bar{Y})
    $$
    for $\bar{Y}$ the image of $Y$ under the forgetful functor $\Mod_\C(A)\to \C$ (see Lemma 7.8.12 in \cite{EGNO}).
    
    It is well known that for a finite group $G$, $A_G:=\fun(G,\kk)$ is an algebra object in the category $\Rep(G)$, see for instance \cite{EGNO}. 
    \begin{prop}\label{prop: de-equivariantizaion of braided finite tensor categories}
    Let $\C$ be a braided finite tensor category with braiding $c$, $G$ a finite group, and assume that $\Rep(G)\subset \C'$ as a symmetric subcategory, then $\Mod_{\C}(A_G)$ is a braided finite tensor category with braiding $\tilde{c}:M\otimes_{A_G} N\to N\otimes_{A_G} M$ such that the diagram
    \begin{center}
        \begin{tikzcd}
            M\otimes N \arrow[r, "c_{M,N}"] \arrow[d, " "] & N\otimes M \arrow[d, " "] \\
            M\otimes_{A_G} N\arrow[r, "\tilde{c}_{M,N}"] & N\otimes_{A_G} M
        \end{tikzcd}
    \end{center}
    commutes, and the free module functor functor $F:\C\to \Mod_{\C}(A_G)$ is braided.
    \end{prop}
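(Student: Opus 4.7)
The plan is to construct $\tilde c_{M,N}$ via the universal property of the coequalizer defining the balanced tensor product $M\otimes_{A_G} N$. Write $\pi_{M,N}\colon M\otimes N \to M\otimes_{A_G} N$ for the canonical projection. The diagram in the statement commutes tautologically once we define $\tilde c_{M,N}$ to be the unique map satisfying $\tilde c_{M,N}\circ \pi_{M,N} = \pi_{N,M}\circ c_{M,N}$, so the real task is to check that $\pi_{N,M}\circ c_{M,N}$ factors through $\pi_{M,N}$.

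Recall that $M\otimes_{A_G} N$ is the coequalizer of two maps $M\otimes A_G\otimes N \rightrightarrows M\otimes N$: the right $A_G$-action on $M$ tensored with $\id_N$, and $\id_M$ tensored with the left $A_G$-action on $N$, where the left action is induced from the right one via the braiding $c_{A_G,N}$ (well-defined because $A_G$ is commutative in $\C'$, making the two candidate left actions coincide). The heart of the proof is verifying that $\pi_{N,M}\circ c_{M,N}$ coequalizes these two maps. This computation combines naturality of $c$, the hexagon identities, and crucially the centralizer hypothesis $s_{A_G,X} = \id$ for every $X\in \C$, which is what allows one to move $c_{M,A_G}$ and $c_{A_G,N}$ past other tensor factors and identify the two paths around the relevant square in $N\otimes_{A_G} M$.

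Once $\tilde c_{M,N}$ is produced, the naturality of $\tilde c$ and its hexagon axioms in $\Mod_\C(A_G)$ follow by descent: each desired identity is verified after precomposing with the appropriate surjection $\pi$, at which point it reduces to the corresponding identity for $c$ in $\C$. In particular, rigidity and the tensor category structure of $\Mod_\C(A_G)$ are already known, so only the braiding axioms need this verification.

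For the free module functor $F(X) = X\otimes A_G$, one uses the canonical identification $F(X)\otimes_{A_G} F(Y) \cong (X\otimes Y)\otimes A_G = F(X\otimes Y)$ to check that $\tilde c_{F(X),F(Y)}$ corresponds under this isomorphism to $c_{X,Y}\otimes \id_{A_G}$, which is exactly $F(c_{X,Y})$; this is an essentially formal consequence of the construction together with the hexagon for $c$. The main obstacle in the whole argument is the coequalizer check of the second paragraph: one must carefully unravel how the two $A_G$-module structures interact with the braiding on $M\otimes N$ and pinpoint precisely where the hypothesis $\Rep(G)\subset \C'$ forces the two candidate descents to agree.
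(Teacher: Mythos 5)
Your outline is exactly the argument the paper invokes: the paper's proof is a one-line reference to Proposition 8.23.8 of \cite{EGNO} (descent of the braiding to the balanced tensor product over the transparent commutative algebra $A_G$), and your coequalizer construction, the use of $\Rep(G)\subset\C'$ to make the induced left action and the descended braiding well-defined, and the verification that $F$ is braided are precisely the content of that cited proof. The proposal is correct and takes essentially the same approach.
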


    \begin{proof}
        The proof is parallel to Proposition 8.23.8 in \cite{EGNO}, using the inclusion as a fully faithful braided tensor functor $\Rep(G)\to \C'$.
    \end{proof}
    
    For a braided finite tensor category $\C$ for which \cref{prop: de-equivariantizaion of braided finite tensor categories} holds, we denote $\Mod_{\C}(\fun(G, \kk))$ as $\C_G$, and call this category \textit{the $G$-de-equivariantization of $\C$}.

    \begin{remark}
        In the case of fusion categories, if $\C$ has a $G$-de-equivariantization, then we have $(\C_G)^G\simeq \C$, for $\D^G$ the $G$-equivariantization of $\D$ by $G$ (see \cite{EGNO} for details on $G$-equivariantization). In finite tensor categories, this is not always true. For example, for $H$ the Sweedler's Hopf algebra, $\Rep(\mathbb{Z}_2)$ is central in $\Rep(H)$, and $\Rep(H)_{\mathbb{Z}_2}=\Vect$, so $(\Rep(H)_{\mathbb{Z}_2})^{\mathbb{Z}_2}\not\simeq \Rep(H)$. This happens because the fiber functor $F:\Rep(H)\to \Vect$ is not a braided fiber functor. 
    \end{remark}
    \begin{prop}\label{prop: invertible objects under de-equivariantization}
        Let $\C$ be a braided finite tensor category with de-equivariantization $\C_G$, and $X\in \C$ a simple object. If $X\in \C$ is invertible, then $X\otimes A$ is invertible in $\C_G$.
    \end{prop}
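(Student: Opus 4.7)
The plan is to exploit the fact that the free module functor $F:\C\to \C_G$ defined by $F(X) = X\otimes A$ is a (braided) tensor functor, and any tensor functor sends invertible objects to invertible objects. Since we are given that $X$ is simple and invertible in $\C$, in particular there is a dual $X^*\in \C$ with $X\otimes X^* \cong \1 \cong X^*\otimes X$, and the goal is to transport this duality through $F$.

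Concretely, I would first recall that the monoidal structure on $F$ supplies natural isomorphisms $F(Y)\otimes_A F(Z) \cong F(Y\otimes Z)$ for all $Y,Z\in \C$, together with $F(\1_\C) = A = \1_{\C_G}$. Applying these isomorphisms to the two invertibility relations for $X$ gives
\begin{equation*}
    F(X)\otimes_A F(X^*) \;\cong\; F(X\otimes X^*) \;\cong\; F(\1_\C) \;=\; A,
\end{equation*}
and symmetrically $F(X^*)\otimes_A F(X) \cong A$. Hence $F(X) = X\otimes A$ admits a two-sided tensor inverse $F(X^*) = X^*\otimes A$ in $\C_G$, so it is invertible as claimed.

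There is no real obstacle here: the statement is essentially a formal consequence of $F$ being a monoidal functor out of $\C$, together with the identification of $\1_{\C_G}$ with $A$ itself. The only point worth stressing in writing the proof is to cite \cref{prop: de-equivariantizaion of braided finite tensor categories} for the existence of the monoidal (in fact braided) structure on $F$, and to note that the isomorphism $F(X\otimes X^*)\cong F(X)\otimes_A F(X^*)$ is a consequence of the general identity $(Y\otimes A)\otimes_A(Z\otimes A) \cong (Y\otimes Z)\otimes A$ in $\Mod_\C(A)$ that defines the tensor structure of the de-equivariantization.
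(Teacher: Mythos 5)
Your proof is correct, but it takes a genuinely different route from the paper's. The paper argues via Frobenius--Perron dimensions: the free module functor $F$ is a \emph{surjective} tensor functor, hence by \cite{EO} it preserves Frobenius--Perron dimensions, so $\FPdim_{\C_G}(X\otimes A)=\FPdim_{\C}(X)=1$, and an object of Frobenius--Perron dimension $1$ in a finite tensor category is invertible. You instead use the purely formal fact that a strong monoidal functor carries a tensor-inverse pair $(X,X^*)$ to a tensor-inverse pair $(F(X),F(X^*))$, via $F(X)\otimes_A F(X^*)\cong F(X\otimes X^*)\cong F(\1)=A=\1_{\C_G}$ and its mirror image. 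Your argument is more elementary and more general: it needs only the monoidal structure on $F$ supplied by \cref{prop: de-equivariantizaion of braided finite tensor categories} (indeed it works for any tensor functor), and it avoids invoking surjectivity and the dimension-preservation theorem. The paper's argument, on the other hand, is a one-line application of machinery already in place and matches the dimension-counting style used elsewhere (e.g.\ in \cref{prop: E_+ is central in C}). Both are valid; note in passing that the simplicity hypothesis on $X$ is not actually used in either argument, since invertible objects in a tensor category are automatically simple.
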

    \begin{proof}
        The free module functor is a surjective tensor functor, so by \cite{EO} it preserves Frobenius-Perron dimensions, thus $\FPdim_{\C_G}(X\otimes A) = 1$, hence by \cite{EGNO}, $X\otimes A$ is invertible in $\C_G$.
    \end{proof}

    \subsection{Braided exact sequences of tensor categories}
    In this subsection, we recall the notion of braided exact sequences of finite tensor categories, which was introduced in \cite{BN3}. 

    A tensor functor $F:\C\to \D$ is called \textit{normal} if any object $X\in \C$ admits a subobject $X'\in \C$ such that $F(X')$ is the largest subobject of $F(X)$ isomorphic to $(\1_{\D})^{\oplus n}$ for $n\in \N$.

    For $\C_1,\C_2,\C_3$ finite tensor categories over $\kk$, and tensor functors $i:\C_1\to \C_2$, $F:\C_2\to \C_3$, the sequence of tensor categories:
    $$
        \C_1\xrightarrow{i} \C_2 \xrightarrow{F} \C_3
    $$
    is called an \textit{exact sequence of tensor categories} if $F$ is surjective and normal, and $i$ is a full embedding such that $F\circ i(X)\in \Vect$ for all $X\in \C_1$. 
    \begin{remark}
        Notice that this definition requires in particular that $\C_1$ admits a fiber functor. The notion of exact sequences was generalized to avoid this restriction by \cite{EG}, but in this work, we will consider exact sequences as defined in \cite{BN3}.
    \end{remark}

    An exact sequence of tensor categories is called \textit{braided} if the categories $\C_1,\C_2,\C_3$ are braided and the functors $i,F$ are braided. A braided exact sequence is called \textit{modularization exact sequence} if $\C_3$ is non-degenerate.
    \begin{remark}
        If $\E\xrightarrow{i} \C \xrightarrow{F} \D$ is a braided exact sequence, then $\E\subset \C'$, that is, $\E$ lies in the M\"uger center of $\C$. 
    \end{remark}
    \begin{example}
        Assume that $\C$ is a braided finite tensor category, $G$ a finite group and the conditions of \cref{prop: de-equivariantizaion of braided finite tensor categories} hold, then the sequence:
        $$
            \Rep(G)\xrightarrow{i}\C\xrightarrow{F}\C_G
        $$
        is a braided exact sequence, for $F:\C \to \C_G$ the free module functor $F(X) = X\otimes A$ for $A = \fun(G, \kk)$ the algebra object in $\Rep(G)$.
    \end{example}
    \begin{example}\label{ex: de-equivariantization of pointed braided category}
        Assume $\C$ is a braided pointed finite tensor category, and let $G$ be the group of invertible objects of $\C$. Assume $H<G$ is a maximal subgroup such that $\Rep(H)\subset \C'$ as a symmetric subcategory. The category $\C_H$ is a braided pointed finite tensor category which is non-degenerate (that is, $(\C_H)' = \Vect$), and the group of invertible objects of $\C_H$ is $G/H$. Denoting the simple objects of $\C$ by $\{L_g\}_{g\in G}$, an invertible object $X\in \C_H$ is of the form $\oplus_{g\in B}L_g$ (as an object in $\C$) for $B$ a $H$-coset in $G$. 
    \end{example}
    
\section{The structure of braided simple extensions}\label{sec: braided simple projective extensions}    
    Let $\C$ be a braided simple extension of $\D$ with braiding $c$, and assume that $\D$ is not semisimple. Let $Q$ be the unique simple projective object in $\C$ which is not in $\D$, and let $\{L_i\}_{i=1}^{n}$ be the (isomorphism classes of) simple objects in $\D$ with projective covers $P_i$. The existence of the simple projective object $Q$ and the braiding on $\C$ enables us to understand the structure of $\C$ and $\D$.

    \subsection{Basic properties of simple extensions}
    The following lemma is true for a general simple extension $\C$ of $\D$, not necessarily braided:
    \begin{lemma}\label{lemma: tensor product of simple projective object}
        Let $\D$ be a non semisimple finite tensor category, $\C$ a simple extension of $\D$, and let $Q$ be the simple projective object of $\C$. For a simple object $L\in \D$ with projective cover $P\in \D$ we have $[Q\otimes Q:P] = \FPdim_{\D}(L)$.
    \end{lemma}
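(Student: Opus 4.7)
The plan is to compute $[Q\otimes Q:P]$ via a duality/adjunction argument, exploiting the fact that $Q$ is simple projective and that everything in $\M$ is controlled by $Q$.

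First I would observe that since $Q$ is projective, $Q\otimes Q$ is projective in $\C$, and the indecomposable projectives of $\C$ are precisely $\{P_i\}_{i=1}^n$ (the covers of the $L_i\in\D$) together with $Q$ itself. So $Q\otimes Q$ decomposes as $\bigoplus_i a_i P_i \oplus rQ$ for some $a_i,r\in\N$, and by the standard relation $\dim\Hom(X,L)=[X:P_L]$ for $X$ projective and $L$ simple with cover $P_L$, we have $a_j=\dim\Hom_{\C}(Q\otimes Q,L_j)$. Using the duality adjunction this rewrites as $\dim\Hom_{\C}(Q,L_j\otimes Q^*)$.

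The second step is to identify $L_j\otimes Q^*$. I would first show $Q^*\in\M$: if instead $Q^*\in\D$, then $Q^*\otimes Q$ would lie in $\D\otimes\M\subset\M$ by the $\D$-bimodule decomposition, but $\Hom_{\C}(\1,Q^*\otimes Q)\cong\Hom(Q,Q)=\kk\neq 0$ with $\1\in\D$, contradicting $\Hom_{\C}(\D,\M)=0$. Hence $Q^*\in\M$, and consequently $L_j\otimes Q^*\in\D\otimes\M\subset\M$ and is projective.

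Next I would argue that $\M$ is semisimple with unique simple $Q$: since $Q$ is projective in the finite tensor category $\C$ it is also injective, hence injective in the Serre subcategory $\M$; as $Q$ is the only simple of $\M$, the socle of any $M\in\M$ is a sum of copies of $Q$, and injectivity of $Q$ makes this socle split off, so an easy induction on length gives $M\cong Q^{\oplus k}$. Therefore $L_j\otimes Q^*\cong Q^{\oplus m_j}$ for some $m_j$, and taking Frobenius--Perron dimensions together with $\FPdim(Q^*)=\FPdim(Q)$ gives $m_j=\FPdim_{\D}(L_j)$.

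Combining these, $\Hom_{\C}(Q,L_j\otimes Q^*)\cong\Hom_{\C}(Q,Q^{\oplus m_j})=\kk^{m_j}$ by Schur's lemma, so $a_j=m_j=\FPdim_{\D}(L_j)$, as desired. The only mildly delicate point I anticipate is the verification that $Q^*\in\M$ (equivalently, that $\D$ contains no simple projective, which is what forces $Q^*\not\in\D$); everything else is a routine adjunction and FPdim computation, and the semisimplicity of $\M$ falls out of having a single simple that happens to be injective.
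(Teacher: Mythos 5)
Your proof is correct and follows essentially the same route as the paper: both reduce the multiplicity $[Q\otimes Q:P]$ to $\dim\Hom_{\C}(Q, L\otimes Q^*)$ via the duality adjunction and then identify $L\otimes Q^*$ (resp.\ $L\otimes Q$) with $Q^{\oplus \FPdim_{\D}(L)}$, concluding by Schur's lemma. The only (harmless) difference is in how that last isomorphism is obtained --- the paper reads it off from the identification of $\M$ with $\Vect$ as a $\D$-module category coming from the fiber functor, while you derive it from the semisimplicity of $\M$ plus a Frobenius--Perron dimension count, and you additionally spell out the verification that $Q^*\in\M$, which the paper leaves implicit when it invokes self-duality of $Q$.
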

    \begin{proof}
        The category $\D$ has a simple extension, hence by definition has a fiber functor $\D\to \Vect$. In particular, $\D$ is integral, and $\FPdim_{\D}(L) = \dim_{\kk}(L)$ (see \cite{EGNO}). $\M=<Q>$ is a $\D$-module category, and $Q$ is projective, hence we have an isomorphism $L\otimes{Q}\cong{\oplus_{i=1}^{\dim_{\kk}(L)}Q}$, which gives
        $$
            0\ne\oplus_{i=1}^{\dim_{\kk}(L)}\Hom_{\C}(Q,Q)={\Hom_{\C}(Q,L\otimes{Q})} = \Hom_{\C}(Q\otimes{Q^*},L) = \Hom_{\C}(Q\otimes{Q},L)
        $$
        the last equality arises from the fact that $Q$ is self-dual.

        Since $L$ is simple, each such nonzero morphism is epi, so we get a morphism $P\to Q\otimes{Q}$, which implies that $P$ is a direct summand of $Q\otimes Q$, because $Q\otimes{Q}$ is projective.

        By Schur's lemma $\dim(\Hom_{\C}(Q,Q))=1$, hence the multiplicity of $P$ in $Q\otimes{Q}$ is exactly $\dim_{\kk}(L)=\FPdim_{\D}(L)$.
\end{proof}
    By \cref{lemma: tensor product of simple projective object}, we have:
    \begin{equation}\label{eq: Q times Q}
        Q\otimes Q = \oplus_{i=1}^{n}\FPdim(L_i)P_i\oplus rQ
    \end{equation}
    for some $r\in \N$. By definition, we have $\FPdim(\C) = \FPdim(\D)+\FPdim(Q)^2$, so using \cref{eq: Q times Q} we get that $\FPdim(\C) = 2\FPdim(\D)+r\FPdim(Q)$. 
    
    \subsection{Degeneracy in braided simple extensions}
    From here on, let $\D$ be a braided non-semisimple finite tensor category, $\C$ a braided simple extension of $\D$, with a unique simple projective object $Q$. Let $\E_+\subset \D$ be the full  subcategory generated by objects $X$ which centralize $Q$.
    \begin{remark}\label{remark: scalar maps braiding on isomorphic objects}
        Assume we have $X,Y,Z\in \C$ such that $c_{X, Y} = \lambda\id_{Y\otimes X}$ for some $\lambda\in \kk^*$ and $\psi:X\simeq Z$, then $c_{Z, Y} = \lambda\id_{Y\otimes Z}$, by the following diagram:
        \begin{center}
            \begin{tikzcd}
                X\otimes Y \arrow[r, "c_{X, Y}"] \arrow[d, "\psi\otimes \id_{Y}"'] & Y\otimes X \arrow[d, "\id_{Y}\otimes \psi"] \\
                Z\otimes Y \arrow[r, "c_{Z, Y}"] & Y\otimes Z
            \end{tikzcd}
        \end{center}
        which commutes due to the naturality of the braiding $c$. The same holds for $s_{X,Y}$ and $s_{Z,Y}$.
    \end{remark}
    \begin{prop}\label{prop: the subcategory E+}
        $\E_+$ is a symmetric tensor Serre subcategory of $\D$. 
    \end{prop}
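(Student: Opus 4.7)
The plan is to verify in turn the three properties defining a symmetric tensor Serre subcategory: tensor closure (together with containing $\1$ and direct sums), symmetry, and closure under subquotients and extensions. I first check tensor closure by a direct six-term collapse; then I use an auxiliary observation about $Q\otimes Q$ to place $\E_+$ inside the M\"uger center of $\D$, which automatically yields symmetry; and finally I deduce Serre closure via a characterization of $\E_+$ as an intersection of two Serre subcategories of $\C$.

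For tensor closure, fix $X, Y \in \E_+$. Applying \cref{eq: squared braiding on (X times Y) times Z} with $Z = Q$ and using $s_{X, Q} = \id$ collapses the six-term composition to $a^{-1}_{X,Y,Q}\circ (\id_X \otimes s_{Y, Q}) \circ a_{X, Y, Q}$; since $s_{Y, Q} = \id$, this equals $\id_{(X\otimes Y)\otimes Q}$. The unit $\1$ trivially centralizes $Q$, and closure under direct sums and direct summands is immediate from naturality of $c$.

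To prove symmetry, the key observation is that centralizing $Q$ is already much stronger than it may appear. Applying \cref{eq: squared braiding on X times (Y times Z)} with $Y = Z = Q$ and $s_{X, Q} = \id$ collapses the formula to $s_{X, Q\otimes Q} = \id$. By \cref{lemma: tensor product of simple projective object} we have $Q\otimes Q = \oplus_i \FPdim(L_i) P_i \oplus rQ$, so naturality of $s$ with respect to the summand inclusions $P_i \hookrightarrow Q\otimes Q$ forces $s_{X, P_i} = \id$ for every $i$. Every object $Y \in \D$ admits an epi $\pi : P_Y \twoheadrightarrow Y$ from a direct sum of $P_i$'s, and the naturality relation $s_{X, Y} \circ (\id_X \otimes \pi) = (\id_X \otimes \pi) \circ s_{X, P_Y} = \id_X \otimes \pi$, combined with the fact that $\id_X \otimes \pi$ is epi, yields $s_{X, Y} = \id$. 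Hence $\E_+ \subset (\D)'_{\D}$, and in particular $\E_+$ is symmetric.

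For Serre closure, subobject and quotient closure follow from the same naturality and mono/epi cancellation argument. The main obstacle is closure under extensions: for a short exact sequence $0 \to X_1 \to W \to X_2 \to 0$ with $X_i \in \E_+$, a direct diagram chase only shows that $s_{W, Q} - \id$ factors through some morphism $X_2\otimes Q \to X_1\otimes Q$ that is not obviously zero. I resolve this by the characterization $\E_+ = \D \cap Q'$, where $Q' = \{X \in \C : s_{X, Q} = \id\}$. The subcategory $\D$ is Serre in $\C$ by the abelian decomposition $\C = \D \oplus \M$. The subcategory $Q'$ coincides with $\langle Q\rangle'$, the centralizer of the tensor subcategory generated by $Q$: centralizing $Q$ promotes to centralizing $Q^{\otimes n}$ by induction via \cref{eq: squared braiding on X times (Y times Z)}, to $Q^*$ by standard braiding/duality compatibility, and then to direct sums and summands; and the centralizer of any tensor subcategory in a braided finite tensor category is a tensor Serre subcategory. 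The intersection $\D \cap Q'$ is therefore Serre in $\C$, and a fortiori in $\D$.
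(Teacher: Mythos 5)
Your tensor-closure step matches the paper's, and your symmetry argument is a legitimate variant that is in fact stronger than what the paper proves at this point: by pushing $s_{X,Q}=\id$ through $Q\otimes Q=\oplus_i\FPdim(L_i)P_i\oplus rQ$ and then through projective covers, you obtain $\E_+\subseteq(\D)'_{\D}$, which already anticipates \cref{prop: E_+ is central in C}. (The paper instead uses that $Q\otimes X$ is a direct sum of copies of $Q$ together with \cref{remark: triviality of square braiding over commutative diagrams}; both routes are fine.)

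The gap is in the Serre step, exactly at the point you yourself flagged. The fact you invoke to dispose of the off-diagonal term --- that the centralizer of any tensor subcategory of a braided finite tensor category is a Serre subcategory --- is false outside the semisimple setting: centralizers are closed under subquotients (by the mono/epi cancellation you use), but not under extensions, because the double braiding on an extension can be unipotent and nontrivial even though it is trivial on the sub and on the quotient. The paper itself contains a counterexample: in the remark following \cref{prop: E_+ is central in C}, one has $\D'\simeq\sVec$ inside a non-semisimple pointed $\D$ with $\pic(\D)=\mathbb{Z}_2$, so this centralizer contains both simple objects of $\D$ but not $P_{\1}$, which is an iterated extension of them. (Concretely, $\Rep(H_4)$ with a non-triangular $R$-matrix already exhibits the phenomenon: the monodromy is trivial whenever one factor is $\1$ or $S$, yet equals $\id$ plus a nonzero nilpotent on $P_{\1}\otimes P_{\1}$.) So the reduction to a general statement about centralizers cannot work, and indeed your argument never uses that $Q$ is projective, which is the decisive hypothesis here. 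The paper's proof of extension-closure does use it: since $Q$ is projective, applying $Q\otimes-$ to $0\to X\to Y\to Z\to 0$ produces a split exact sequence, from which the paper deduces $s_{Q,Y}=s_{Q,X}\oplus s_{Q,Z}$, giving both directions of the Serre property at once. You should replace your final paragraph with an argument of this kind.
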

    \begin{proof}
    For $X,Y\in \D$ centralizing $Q$, we have by \cref{eq: squared braiding on X times (Y times Z)} $s_{Q,X\otimes Y} = \id_{Q\otimes X\otimes Y}$, so $X\otimes Y\in \E_{+}$. The object $Q$ is self dual, thus $\E_{+}$ is closed under taking duals (because $s_{Q,X} = \id_{Q\otimes X}$ if and only if $s_{X,Q} = \id_{X\otimes Q}$). This implies that $\E_+$ is a rigid monoidal category, and clearly it is abelian and $\kk$-linear as a full subcategory of $\D$.
    
    Consider a short exact sequence $0\to X\to Y\to Z\to 0$ in $\D$. Applying the functor $Q\otimes-$ splits this sequence, because $Q$ is projective in $\C$ (and this is a short exact sequence in $\C$ as well), so we have a commutative diagram:
    \begin{center}
        \begin{tikzcd}
            0 \arrow[r," "] & Q\otimes X \arrow[r, " "] \arrow[d, "s_{Q,X}"] & Q\otimes Y \arrow[r, " "] \arrow[d, "s_{Q,Y}"]& Q\otimes Z \arrow[r, " "] \arrow[d, "s_{Q,Z}"]&0    \\
            0 \arrow[r," "] & Q\otimes X \arrow[r, " "]  & Q\otimes Y \arrow[r, " "] & Q\otimes Z \arrow[r, " "] &0
        \end{tikzcd}
    \end{center}
    and the splitting of the two rows imply that $s_{Q,Y} = s_{Q,X}\oplus s_{Q,Z}$, thus $Y$ centralizes $Q$ if and only if $X$ and $Z$ do, which implies that $\E_+$ is a Serre subcategory.

    Assume $X,Y$ centralize $Q$. By \cref{eq: squared braiding on (X times Y) times Z} for the triplet $(Q,X,Y)$ and $s_{Q,Y} = \id_{Q\otimes Y}$ we get:
    $$
        s_{Q\otimes X,Y} = a^{-1}_{Q,X,Y}\circ (\id_Q\otimes s_{X,Y})\circ a_{Q,X,Y},  
    $$
    while on the other hand, $Q\otimes X$ is a direct sum of copies of $Q$, so by \cref{remark: scalar maps braiding on isomorphic objects} $s_{Q\otimes X,Y} = \id_{Q\otimes X\otimes Y}$, which implies that $\id_Q\otimes s_{X,Y} = \id_{Q\otimes X\otimes Y}$, thus $s_{X,Y} = \id_{X\otimes Y}$. 
    \end{proof}
    
    By Corollary 8.10.8 in \cite{EGNO}, the distinguished invertible object of $\C$ lies in $\E_+$.
    
\begin{crl}\label{crl: If Q in D' then D symmetric}
        If $Q\in \D'$ then $\D = \E_+$ is symmetric.
    \end{crl}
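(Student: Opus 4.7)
The plan is to observe that the corollary is essentially a direct translation of the hypothesis through the two definitions involved, followed by an invocation of the preceding proposition. First I would unpack what $Q\in\D'$ means: by the definition of the centralizer inside $\C$, this says $s_{Q,X}=\id_{Q\otimes X}$ for every $X\in\D$. By the very definition of $\E_+$ as the full subcategory of $\D$ generated by objects which centralize $Q$, this exactly asserts that every object of $\D$ lies in $\E_+$, i.e.\ $\D\subseteq\E_+$. Combined with the obvious reverse inclusion $\E_+\subseteq\D$, we get $\D=\E_+$.

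Then I would apply \cref{prop: the subcategory E+}, which already establishes that $\E_+$ is a symmetric tensor Serre subcategory of $\D$. Since $\D=\E_+$, the category $\D$ is symmetric, proving the corollary. There is essentially no obstacle here: the work has all been done in the proposition, and the corollary is just the observation that the hypothesis $Q\in\D'$ forces $\E_+$ to exhaust $\D$. The only subtlety worth noting (but not elaborating on) is that $Q$ centralizing $X$ is equivalent to $X$ centralizing $Q$, so the symmetric formulation of ``$X\in\E_+$'' is unambiguous.
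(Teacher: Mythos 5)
Your proof is correct and follows exactly the paper's argument: the hypothesis $Q\in\D'$ forces $\E_+=\D$ by definition, and then \cref{prop: the subcategory E+} gives symmetry. The remark about the equivalence $s_{Q,X}=\id \Leftrightarrow s_{X,Q}=\id$ is a sensible thing to note and is covered by the paper's preliminaries on centralizers.
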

    \begin{proof}
        By definition, if $Q\in \D'$ then $\E_{+} = \D$, and by \cref{prop: the subcategory E+}, $\D$ is symmetric.
    \end{proof}

        \begin{prop}\label{prop: squared braiding on Q}
        Let $\D$ be a non-semisimple braided finite tensor category, $\C$ a braided simple extension of $\D$ with a simple projective object $Q$, then $s_{Q,Q}\ne \id_{Q\otimes Q}$
    \end{prop}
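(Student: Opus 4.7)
The plan is to argue by contradiction: assuming $s_{Q,Q}=\id_{Q\otimes Q}$, I would first show that $\E_+=\D$, then upgrade to the statement that $\C$ itself is symmetric, and finally derive a contradiction from the coexistence of non-semisimplicity (inherited from $\D$) with a simple projective object $Q\neq \1_\C$ in a finite symmetric tensor category over $\kk$.

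For the first step, I would mimic the tensor-closure argument used in the proof of \cref{prop: the subcategory E+}: taking $X=Z=Q$ and $Y=Q$ in \eqref{eq: squared braiding on X times (Y times Z)} and substituting $s_{Q,Q}=\id$ forces $s_{Q,Q\otimes Q}=\id$, i.e.\ $Q$ centralizes $Q\otimes Q$. Using decomposition \eqref{eq: Q times Q} together with the additivity of the squared braiding over direct sums, $Q$ centralizes each $P_i$. Since $\E_+$ is a Serre subcategory (\cref{prop: the subcategory E+}) and each $L_i$ is a quotient of $P_i$, this gives $L_i\in\E_+$ for every $i$, whence $\E_+=\D$ and in particular $Q\in\D'$. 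By \cref{crl: If Q in D' then D symmetric}, $\D$ is symmetric.

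For the second step, I would observe that every object of $\C$ decomposes as $X\oplus nQ$ with $X\in\D$ and $n\geq 0$. By additivity of the braiding, to verify that $\C$ is symmetric it suffices to check the three types of pairs: $\D$-with-$\D$ (handled by $\D$ being symmetric), $\D$-with-$Q$ (handled by $\E_+=\D$), and $Q$-with-$Q$ (our standing assumption). All three hold, so $\C$ is symmetric.

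The final step, and the main obstacle, is to derive the contradiction: a finite symmetric tensor category over an algebraically closed field of characteristic zero cannot simultaneously be non-semisimple and possess a simple projective object $Q$. I would invoke Deligne's structure theorem, according to which such a $\C$ is equivalent to $\Rep(\mathcal{G},z)$ for some finite super-group scheme $\mathcal{G}$, with Hopf algebra of the form $\kk[G_0]\#\Lambda(V)$ for a purely odd supervector space $V$. In this picture the indecomposable projectives take the form $L\otimes \Lambda(V)$ with $L$ simple, and these are simple iff $V=0$ iff $\C$ is semisimple, contradicting the non-semisimplicity of $\D\subset\C$. An in-house argument bypassing Deligne might instead analyze $P(\1_\C)\in\D$, which appears with multiplicity $1$ as a summand of $Q\otimes Q$, and pit its Loewy length $>1$ against the constraints imposed by symmetry on the Cartan matrix; I expect such a route to be substantially more technical.
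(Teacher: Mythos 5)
Your proposal is correct and follows essentially the same route as the paper's proof: force $\E_+=\D$ via the squared-braiding composition formula applied to $Q\otimes Q$ and the Serre property of $\E_+$, conclude that $\C$ is symmetric, and then contradict the existence of a simple projective object using Deligne's classification of finite symmetric tensor categories in characteristic zero. Your extra detail on why $\Rep(\mathcal{G},z)$ has no simple projectives when non-semisimple only makes explicit what the paper asserts in one line.
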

    \begin{proof}
        Assume $s_{Q,Q}=\id_{Q\otimes Q}$, then by \cref{eq: squared braiding on (X times Y) times Z} for $X=Y=Z=Q$ we get $s_{Q\otimes Q,Q}=\id_{Q\otimes Q\otimes Q}$. By \cref{eq: Q times Q}, we get $s_{Q,P_i} = \id_{Q \otimes P_i}$, thus $P_i\in \E_+$ for each $i$. By \cref{prop: the subcategory E+} $\E_+$ is a Serre subcategory, which implies that every indecomposable object $X\in \D$ is also in $\E_+$, thus $\E_+ = \D$.
        
        By \cref{crl: If Q in D' then D symmetric}, $\D$ is symmetric, and $Q$ centralizes $\C$ (as it centralizes $\D$ and $s_{Q,Q} = \id_{Q\otimes Q}$), thus $\C$ is symmetric. 

        The category $\C$ is a symmetric finite tensor category over an algebraically closed field of characteristic 0, thus by \cite{D}, $\C$ is equivalent to a category of representations of some finite supergroup, but such a category has no simple projective object, unless it is semisimple itself, thus contradicting our assumption.
    \end{proof}    
    
    \begin{thm}
        Let $\D$ be a braided finite tensor category. If $\D$ has a braided simple extension $\C$, then $\D$ is degenerate.
    \end{thm}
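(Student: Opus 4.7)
The plan is to split into two cases depending on whether the simple projective object $Q$ lies in the M\"uger centralizer $\D'\subseteq \C$. If $Q\in \D'$, then \cref{crl: If Q in D' then D symmetric} immediately gives that $\D = \E_+$ is symmetric, and since $\D$ is non-semisimple we have $\D\neq \Vect$, so $(\D)'_\D\supseteq \D$ is a non-trivial M\"uger center, showing $\D$ is degenerate.

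The more substantive case is when $Q\notin \D'$. Here I first aim to show $\D'\subseteq \D$, after which I can apply Shimizu's formula. Using the direct sum decomposition $\C\cong \D\oplus \M$ and that $\M\simeq \Vect$ has $Q$ as its unique simple object, any $X\in \C$ decomposes as $X = X_\D\oplus X_\M$ with $X_\M$ a direct sum of copies of $Q$. Naturality of the braiding applied to the canonical inclusions and projections of this decomposition shows that both $c_{X,Y}$ and $c_{Y,X}$ are block-diagonal, hence so is $s_{X,Y} = s_{X_\D,Y}\oplus s_{X_\M,Y}$ for every $Y\in \C$. Therefore $X\in \D'$ iff both $X_\D, X_\M$ lie in $\D'$; since $Q\notin \D'$, no nonzero $X_\M\in \M$ (being a direct sum of copies of $Q$) lies in $\D'$, so $\D'\cap \M = 0$ and $\D' \subseteq \D$, i.e.\ $\D' = (\D)'_\D$.

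Now suppose for contradiction that $\D$ is non-degenerate, so $(\D)'_\D = \Vect$ and $\FPdim(\D') = 1$. Substituting into Shimizu's formula \cref{eq: shimizu} gives $\FPdim(\D) = \FPdim(\C)\FPdim(\C'\cap \D)\geq \FPdim(\C)$, using $\1\in \C'\cap \D$. On the other hand, \cref{eq: Q times Q} yields $\FPdim(\C) = 2\FPdim(\D)+r\FPdim(Q)\geq 2\FPdim(\D)$; combining these gives $\FPdim(\D)\geq 2\FPdim(\D)$, contradicting $\FPdim(\D)\geq 1$.

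The main technical point I expect is verifying that $\D'$ respects the block decomposition of $\C$, so that the case analysis on $Q$ actually partitions the possibilities cleanly. Once that is in place, both cases conclude by short arguments: the first via the symmetry consequence in \cref{crl: If Q in D' then D symmetric} and the second via a numerical contradiction from Shimizu's formula.
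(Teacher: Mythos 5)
Your proof is correct and follows essentially the same route as the paper: both split on whether $Q\in\D'$, handle that case via \cref{crl: If Q in D' then D symmetric}, and handle $Q\notin\D'$ by combining Shimizu's formula for the pair $(\D,\C)$ with $\FPdim(\C)=2\FPdim(\D)+r\FPdim(Q)$. The only differences are cosmetic — you run the second case as a contradiction and spell out the block-diagonality argument showing $\D'\subseteq\D$, which the paper asserts implicitly.
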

    \begin{proof}
        Denote $\C=\D\oplus \M$ and let $Q$ be the simple projective object in $\M$. Set $\FPdim(\D)=n$ and $\FPdim(Q)=q$ ($q\in \kk^*$). By \cref{eq: Q times Q}, $q^2=n+rq$, thus $\FPdim(\C)=2n+rq$. By \cref{eq: shimizu} and \cref{prop: the subcategory E+} we get in $\C$:
        \begin{equation}\label{eq: FPdim(D')}
            n\FPdim(\D') = (2n+rq)\FPdim(\C'\cap \D)
        \end{equation}
        which implies that $\FPdim(\D') > \FPdim(\C'\cap \D)$. If $Q\notin \D'$, then $\D'$ (the centralizer of $\D$ in $\C$) is a full subcategory of $(\D)'_{\D}$ (the centralizer of $\D$ in $\D$), with $\FPdim(\D')>1$, thus $\Vect\subsetneq (\D)'_{\D}$, which implies that $\D$ is degenerate. On the other hand, if $Q\in \D'$, then by \cref{crl: If Q in D' then D symmetric}, $\D$ is symmetric, hence degenerate.
    \end{proof}    
    \begin{prop}\label{prop: E_+ is central in C}
        $\C' = \E_+$. In particular, $\E_+$ is central in $\C$.
    \end{prop}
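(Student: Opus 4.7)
The plan is to establish the two inclusions separately. The inclusion $\C' \subseteq \E_+$ is essentially formal: any $X \in \C$ decomposes as $X = X_\D \oplus nQ$ with $X_\D \in \D$ and $n \in \N$, and naturality of the braiding gives $s_{X, Q} = s_{X_\D, Q} \oplus n \cdot s_{Q, Q}$. By \cref{prop: squared braiding on Q} we have $s_{Q,Q} \neq \id_{Q \otimes Q}$, so $X \in \C'$ forces $n = 0$; then $X = X_\D$ lies in $\D$ and centralizes $Q$ by definition of the M\"uger center, placing it in $\E_+$.

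For the reverse inclusion, I plan to take arbitrary $X \in \E_+$ and $Y \in \D$ and to show $s_{X, Y} = \id_{X \otimes Y}$. Since $\M$ is equivalent as a $\D$-bimodule category to $\Vect$ with unique simple $Q$, the object $Y \otimes Q$ is isomorphic to a direct sum of copies of $Q$. Because $X$ centralizes $Q$, it also centralizes any direct sum of copies of $Q$, and \cref{remark: triviality of square braiding over commutative diagrams} applied to such an isomorphism yields
\[
s_{X, Y \otimes Q} = \id_{X \otimes (Y \otimes Q)}.
\]

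Plugging $Z = Q$ into \cref{eq: squared braiding on X times (Y times Z)} and using $s_{X, Q} = \id_{X \otimes Q}$ to kill the middle factor $\id_Y \otimes s_{X, Q}$, the right hand side collapses to $a_{X,Y,Q} \circ (s_{X, Y} \otimes \id_Q) \circ a^{-1}_{X,Y,Q}$. Combining with the displayed equality above and conjugating by the associativity constraints gives $s_{X, Y} \otimes \id_Q = \id_{(X \otimes Y) \otimes Q}$. To conclude I plan to invoke faithfulness of the tensor functor $- \otimes Q : \D \to \M$: under the equivalence $\M \simeq \Vect$ sending $Q \mapsto \kk$, this functor is identified with a fiber functor of $\D$, hence is exact and faithful, so the cancellation yields $s_{X, Y} = \id_{X \otimes Y}$. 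The only delicate point I anticipate is this final cancellation, but it is precisely the place where the defining property of a simple extension -- that $\D$ admits a fiber functor realized by tensoring with $Q$ -- enters the argument in an essential way.
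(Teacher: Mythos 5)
Your proof is correct, but it reaches the key inclusion $\E_+\subseteq\C'$ by a genuinely different route from the paper. The paper applies Shimizu's identity \cref{eq: shimizu} to the pairs $(\E_+,\D)$ and $(\E_+,\C)$, reduces to a quadratic equation in Frobenius--Perron dimensions, and deduces $\FPdim(\E_+')=\FPdim(\C)$, hence $\E_+'=\C$ and so $\E_+\subseteq\C'$; the reverse inclusion is then obtained, as in your first paragraph, from \cref{prop: squared braiding on Q}. You instead prove directly that any $X\in\E_+$ centralizes every $Y\in\D$: since $Y\otimes Q$ is a sum of copies of $Q$, the hexagon identity \cref{eq: squared braiding on X times (Y times Z)} with $Z=Q$ collapses to $a_{X,Y,Q}\circ(s_{X,Y}\otimes\id_Q)\circ a^{-1}_{X,Y,Q}=\id$, and faithfulness of $-\otimes Q$ (which is exactly the fiber functor of $\D$ realized by $\M\simeq\Vect$, or simply biexactness of $\otimes$ with $Q\neq 0$) cancels the $Q$. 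This is more elementary and self-contained --- no dimension bookkeeping --- and it in fact yields the stronger statement that $\E_+\subseteq(\D)'_\D$, which sharpens the symmetry claim of \cref{prop: the subcategory E+} (there only objects that \emph{both} centralize $Q$ are shown to centralize each other). The paper's dimension argument buys the explicit numerical relation $\FPdim((\E_+)'_\D)=\FPdim(\D)$ along the way, which is in the same spirit as the computations used later (e.g.\ in \cref{eq: FPdim(D')}), but your argument establishes the same conclusion without it. Two cosmetic points: \cref{remark: triviality of square braiding over commutative diagrams} is stated for an isomorphism in the first tensor factor, so you should note that the identical naturality argument applies in the second factor when transporting along $Y\otimes Q\cong Q^{\oplus\dim Y}$; and in the first inclusion the decomposition $s_{X,Q}=s_{X_\D,Q}\oplus n\cdot s_{Q,Q}$ should be justified by naturality of $s$ with respect to the inclusions and projections of the direct sum, exactly as in the paper's splitting argument in \cref{prop: the subcategory E+}. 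Neither affects correctness.
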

    \begin{proof}
        We have $\E_+\subset\D\subset\C$. We will apply \cref{eq: shimizu} for the pairs $(\E_+,\D)$ and $(\E_+,\C)$. We get:
        \begin{equation}\label{eq: shimizu for E and D}
            \FPdim(\E_+)\FPdim((\E_+)'_\D) = \FPdim(\D)\FPdim(\E_+\cap(\D)_\D'),
        \end{equation}
        \begin{equation}\label{eq: shimizu for E and C}
            \FPdim(\E_+)\FPdim(\E_+') = \FPdim(\C)\FPdim(\E_+\cap\C').
        \end{equation}
        On one hand, we have $\C'\cap \E_+ \subset (\D)_\D'\cap \E_+$, while on the other hand, for $X\in (\D)_\D'\cap \E_+$ we have $s_{X,Y}= \id_{X\otimes Y}$ for all $Y\in \D$ (because $X\in \D'$) and for $Y=Q$ (because $X\in \E_+$), thus $X\in \C'\cap \E_+$, hence $\C'\cap \E_+ = (\D)_\D'\cap\E_+$, so $\FPdim(\E_+\cap(\D)_\D') = \FPdim(\E_+\cap\C')$. This gives the equality:
        \begin{equation}\label{eq: fractions of FPdims}
            \frac{\FPdim(\D)}{\FPdim((\E_+)'_\D)} = \frac{\FPdim(\C)}{\FPdim(\E_+')}
        \end{equation}
        Let us denote $\FPdim(\D) = n$, $\FPdim(Q)=q$, $\FPdim(\E'_+) = e_c$, $\FPdim((\E_+)'_\D) = e_d$. By \cref{eq: Q times Q} we have $\FPdim(\C) = 2n+rq$. In addition, by definition $Q\in \E'_+$, thus by calculation of Frobenius-Perron dimension we get $e_c = e_d + q^2 = e_d + n+rq$. Substituting these in \cref{eq: fractions of FPdims} gives:
        $$
            \frac{n}{e_d} = \frac{2n+rq}{e_d+n+rq}, 
        $$
        which translates to the quadratic equation:
        $$
            n^2+n(rq-e_d)-e_drq = 0.
        $$
        This quadratic equation has two solutions: $n=e_d$ or $n=-rq$. 
        The category $\D$ is integral (it has a fiber functor), and $r,q$ are non-negative\footnote{Notice that $q$ is not necessarily integer, but in that case as well $n=-rq$ is not a valid solution, because $n$ and $r$ are integers.}, so we must have $n=e_d$. This implies that: $$\FPdim(\E'_+) =  e_d+n+rq=2n+rq = \FPdim(\C),$$
        thus $\E'_+ = \C$, which in turn implies that $\E_+\subseteq \C'$. On the other hand, by \cref{prop: squared braiding on Q}, $Q\notin \C'$, thus $\C'\subseteq\D$. This implies that for any $X\in \C'$, $X\in \D$ and $s_{Q,X} = \id_{Q\otimes X}$, but this means by definition that $X\in \E_+$, thus $\C'\subseteq \E_+$, completing the proof.
    \end{proof}
    \begin{remark}
        Notice that in general, we don't have $\E_+ = \D'$. For instance, when $\D$ is pointed and $\C$ is non-degenerate, we will prove that $\D$ is slightly degenerate, thus $\E_+ = \Vect$ while $\D' = \sVec$. 
    \end{remark}

    \section{Braided simple extensions of pointed tensor categories}\label{sec: simple extensions of pointed FTC}
    In this section, we assume that $\D$ is a pointed braided finite tensor category, and $\C$ is a braided simple extension of $\D$. Concrete examples of such categories can be found in Appendix A in \cite{GS}. Denote the simple projective object in $\C$ by $Q$. Since $\D$ is pointed, the simple objects of $\D$ generate a pointed braided fusion subcategory, which we denote by $\D_f$. This category corresponds to a pair $(G,q)$ for $G$ a finite abelian group and $q$ a quadratic form on $G$. The group $G$ is simply $\pic(\D)$ (the Picard group), and the squared braiding $s_{L_g,L_h}:L_g\otimes L_h\to L_h\otimes L_g$ of two simple objects $L_g,L_h\in \D_f$ is multiplication by $b(g,h) = \frac{q(gh)}{q(g)q(h)}$.

    From here on, we denote the simple objects of $\D$ by $L_g$ ($g\in G$). We denote the projective cover of $L_g$ by $P_g$, and set $\bar{P} := \oplus_{g\in \pic(\D)}$. Clearly we have $\bar{P}\otimes L_g \cong \bar{P}$ for all $g\in G$. 
    
    \subsection{The \texorpdfstring{$Q$}{Q}-braiding group of \texorpdfstring{$\C$}{C}}\label{sec: Q-grading of Pic(D)} Let $m_g$ be the order of $L_g$ in $\pic(\D)$ (that is, $m_g$ is the minimal integer such that $L_g^{\otimes m_g} = \id_{\D}$). By the definition of a simple extension, we have $$s_{Q,L_g}\in \Hom_{\C}(Q\otimes L_g, Q\otimes L_g) = \Hom_{\C}(Q,Q).$$
    Since $\kk$ is algebraically closed, we get by Schur's lemma that $s_{Q,L_g}$ is simply multiplication by a scalar, which we denote by $\lambda_g$. 

    Applying \cref{eq: squared braiding on X times (Y times Z)} to the triplet $(Q,L_g,L_h)$, and using the fact that $s_{Q,L_h}$ are scalars, we get that $\lambda_g\lambda_h = \lambda_{gh}$, and $s_{Q,\1} = 1$. In other words, we have a group homomorphism $\psi:\pic(\D)\to \kk^*$ given by $\psi(L_g)=\lambda_g$. We define $G_Q:=Im(\psi)$ and call this the \textit{$Q$-braiding group of $\D$}.
    
    Since every $L_g$ has a finite order $m_g$, by repeatedly using \cref{eq: squared braiding on X times (Y times Z)} we see that $\lambda_g$ must be a root of unity, of order dividing $m_g$, thus $G_Q$ is a group of roots of unity of order dividing $|\pic(\D)|$.

    What is the kernel of the morphism $\psi$? It contains exactly all the simple objects $L_g\in \D$ such that $\lambda_g = 1$, that is all the simple objects $L_g\in \D$ for which $s_{Q,L_g} = \id_{Q\otimes L_g}$. Each such simple object lies, by definition, in $\E_+$, thus $\ker\psi = \pic(\E_{+})$.  

    In a way, the group $G_Q$ "encodes" the additional braiding data of the simple extension $\C$ of $\D$. 

    \subsection{Non-semisimple braided near-group categories}    
    \begin{definition}
        Let $\D$ be a pointed non-semisimple finite tensor category admitting a fiber functor, and let $G=\pic(\D)$. 
        \begin{enumerate}
            \item A simple extension $\C$ of $\D$ is called a \textit{non-semisimple near-group category}.
            \item Denoting the unique simple projective object of $\C$ by $Q$, we say that $\C$ has a \textit{generalized fusion rule $(G,r)$} for $r$ such that \cref{eq: Q times Q} holds.
            \item   If $\D$ and $\C$ are braided, and the restriction of the braiding of $\C$ to $\D$ coincide with the braiding of $\D$, we say that $\C$ is a \textit{braided} non-semisimple near-group category.
        \end{enumerate}
        
    \end{definition}
    \begin{prop}\label{prop: QxQ when D is pointed}
        For a braided non-semisimple near-group category $\C$ with a generalized fusion rule $(G,r)$, if $r>0$ then $\E_+ = \D$, i.e. $\D$ is symmetric and $\D' = \C$.
    \end{prop}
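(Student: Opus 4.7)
The plan is to exploit the fact that when $r > 0$, the simple projective object $Q$ appears as a direct summand of $Q\otimes Q$, and to extract from this a constraint on the scalars $\lambda_g \in \kk^*$ defined by $s_{Q, L_g} = \lambda_g\,\id$ in \cref{sec: Q-grading of Pic(D)}. Specifically, I will show that $r>0$ forces $\lambda_g = 1$ for every $g \in G$, which by definition says that every simple $L_g$ centralizes $Q$, hence $L_g \in \E_+$ for all $g$. Since $\E_+$ is a Serre subcategory of $\D$ by \cref{prop: the subcategory E+} and every object of $\D$ has a composition series with simple subquotients among the $L_g$, this gives $\E_+ = \D$. Then \cref{crl: If Q in D' then D symmetric} yields that $\D$ is symmetric, and since $Q \in \D'$ as well, $\D' \supseteq \C$, and hence $\D' = \C$.

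The key computation is to evaluate $s_{L_g, Q \otimes Q}$ in two different ways. First, Schur's lemma applied to $L_g \otimes Q \cong Q$ shows $s_{L_g, Q}$ is also a scalar $\mu_g\,\id$, and a one-line naturality argument, applying $c_{Q, L_g}$ to both sides of $c_{L_g, Q}\circ c_{Q, L_g} = \lambda_g \id$, identifies $\mu_g = \lambda_g$. Substituting this into \cref{eq: squared braiding on X times (Y times Z)} with $X = L_g$ and $Y = Z = Q$, the middle factor $\id_Q \otimes s_{L_g, Q}$ becomes multiplication by $\lambda_g$, the flanking associators cancel, and the remaining product of two braidings reassembles into $s_{L_g, Q} \otimes \id_Q$, producing $s_{L_g, Q\otimes Q} = \lambda_g^2\,\id$. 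Second, naturality of the squared braiding in the second variable shows that restricting $s_{L_g, Q\otimes Q}$ along any inclusion $f : Q \hookrightarrow Q\otimes Q$ of a $Q$-summand (available precisely because $r>0$) and its retraction returns $s_{L_g, Q} = \lambda_g\,\id$. Comparing the two computations forces $\lambda_g^2 = \lambda_g$, and since $\lambda_g \in \kk^*$ one concludes $\lambda_g = 1$.

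The main obstacle is bookkeeping rather than conceptual: the formulae \cref{eq: squared braiding on X times (Y times Z)} and \cref{eq: squared braiding on (X times Y) times Z} carry associators, and one must check that the Schur-type scalars pulled out of the middle factor survive the cancellation against the outer associators cleanly, and that naturality is applied in the correct variable. The identification $\mu_g = \lambda_g$ is what makes the collapse work, since otherwise the two evaluations of $s_{L_g, Q\otimes Q}$ would involve distinct unknowns and would not yield a usable equation. Once this scalar manipulation is in hand, the passage from $\lambda_g = 1$ to $\E_+ = \D$ and to the conclusion $\D' = \C$ is immediate from the general results of \cref{sec: braided simple projective extensions}.
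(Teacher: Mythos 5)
Your proof is correct and follows essentially the same route as the paper: both arguments compute the squared braiding of $L_g$ against $Q\otimes Q$ in two ways (via \cref{eq: squared braiding on X times (Y times Z)}/\cref{eq: squared braiding on (X times Y) times Z} to get $\lambda_g^2$, and via naturality applied to the decomposition of $Q\otimes Q$, where the $Q$-summand present when $r>0$ contributes $\lambda_g$), forcing $\lambda_g=1$ and hence $\E_+=\D$ by the Serre property. The only cosmetic differences are the order of the arguments of $s$ (handled correctly by your observation that $s_{X,Y}$ and $s_{Y,X}$ are conjugate, hence equal scalars) and that you restrict to a single $Q$-summand rather than writing out the full direct-sum decomposition as the paper does.
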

    \begin{proof}
        Consider the morphism $s_{Q\otimes Q, L_g}$. On one hand, by \cref{eq: squared braiding on (X times Y) times Z} for the triplet $(Q,Q,L_g)$ we get (see also \cref{remark: scalar maps braiding on isomorphic objects}): 
        $$
        s_{Q\otimes Q,L_g} = \lambda_g^2\id_{Q\otimes Q\otimes L_g} = \lambda_g^2\id_{\bar{P}}\oplus (\oplus_r \lambda_g^2\cdot \id_Q)
        $$ 
        We use here the isomorphism $Q\otimes L_g \cong Q$. On the other hand, using the naturality of the square braiding we have:
        $$
            s_{Q\otimes Q,L_g} = s_{\bar{P},L_g}\oplus (\oplus_r s_{Q,L_g}) = s_{\bar{P},L_g} \oplus (\oplus_r\lambda_g\id_Q)
        $$
    Comparing these two expressions and using our assumption that $r>0$, we conclude that $\lambda_g = 1$ for all $g$. This implies that $L_g\in \E_+$ for all $g\in G$. By \cref{prop: the subcategory E+},  $\E_+$ is a Serre subcategory of $\D$, thus $\E_+ = \D$, hence $\D$ is symmetric. By \cref{prop: E_+ is central in C} we get that $\D' = \C$.    
    \end{proof}
           
    \restate{Theorem}{\ref{thm: non semisimple braided near-group categories have fusion rule 0}}{Let $\C$ be a non-semisimple braided near-group category with generalized fusion rule $(G,r)$, then $r=0$, and consequently $\C$  is weakly integral.}
    \begin{proof}[Proof of \cref{thm: non semisimple braided near-group categories have fusion rule 0}]
        Let $\C$ be a braided non-semisimple near-group category with a generalized fusion rule $(G,r)$, and assume that $r>0$. By \cref{prop: QxQ when D is pointed}, we get that $\D$ is a symmetric subcategory of $\C$ and $Q\in \D'$. By \cite{D}, since $\D$ is not semisimple, it is braid equivalent to $\sRep(G\ltimes W,u)$, a category of finite supermodules of the supergroup $G\ltimes W$, with $u\in G$ a central element of order $2$ acting as the parity. Since $\D$ is pointed, the simple objects of $\D$ generate a fusion subcategory, which is braid equivalent to $\Vect_{G\times \mathbb{Z}/2\mathbb{Z}}$ (the central element $u$ generates $\mathbb{Z}/2\mathbb{Z}$). In particular, there exist a simple object $S\in \D_f\subset \D$ such that $S\otimes S = \1$ and $c_{S,S} = -\tau_{S,S}$ for $\tau_{X,Y}$ the flip morphism $\tau_{X,Y}:X\otimes Y\to Y\otimes X$.

        The tensor product in $\C$ is compatible with the left and right $\D$-module structure of $\M$, which in turn is compatible with the left and right $\D$-module structure of $\Vect$, thus for $a$ the associativity constraint in $\C$ we have $a_{S,S,Q} = \id_{S\otimes S\otimes Q}$ and $a_{Q,S,S} = \id_{Q\otimes S\otimes S}$. The objects $S$ and $Q$ are simple, and $S$ is invertible, thus by abuse of notation, we can treat the morphisms $c_{S,S},c_{S,Q},c_{Q,S}$ and $a_{S,Q,S}$ as scalars. 
        
        Consider the following diagram:
        \begin{center}
            \begin{tikzcd}
                S\otimes (Q\otimes S) \arrow[r, "a^{-1}_{S,Q,S}"] & (S\otimes Q)\otimes S \arrow[d, "c_{S,Q}\otimes \id_S"] & \\
                S\otimes S\otimes Q \arrow[u, "\id_S\otimes c_{S,Q}"] \arrow[r, "c_{S\otimes S,Q}"] \arrow[d, ""] & Q\otimes S\otimes S \arrow[d, ""] \\
                \1 \otimes Q \arrow[r, "c_{\1,Q}"] &  Q\otimes \1
            \end{tikzcd}
        \end{center}
        the upper rectangle commutes by the axiom of the braiding (suppressing the associativity constraints $a_{Q,S,S}$ and $a_{S,S,Q}$ which are identities as explained above), and the lower square commutes by the naturality of the braiding. This diagram implies that $a_{S,Q,S}\cdot (c_{S,Q}^2) = 1$.

        Now consider the following diagram. For ease of read, we use the notation $S'$ to distinguish the second copy of $S$ from the first:
        \begin{center}
            \begin{tikzcd}
                S\otimes (S'\otimes Q) \arrow[rr, "a^{-1}_{S,S',Q}"] & & (S\otimes S')\otimes Q \arrow[d, "c_{S,S'}\otimes \id_Q"]\\
                S\otimes (Q\otimes S') \arrow[u, "\id_S\otimes c_{Q,S'}"] & & (S'\otimes S)\otimes Q \arrow[d, "a_{S',S,Q}"]\\
                (S\otimes Q)\otimes S' \arrow[rr, "c_{S\otimes Q,S'}"] \arrow[d, "c_{S,Q}\otimes \id_S'"] \arrow[u, "a_{S,Q,S'}"] & & S'\otimes (S\otimes Q) \\
                (Q\otimes S)\otimes S' \arrow[d, "a_{Q,S,S'}"] & & (S'\otimes Q)\otimes S \arrow[u, "\id_S'\otimes c^{-1}_{Q,S} \circ a_{S',Q,S}"]\\
                Q\otimes (S\otimes S') \arrow[r, "\id_Q\otimes c_{S,S'}"] & Q\otimes (S'\otimes S) \arrow[r, "a^{-1}_{Q,S',S}"] & (Q\otimes S')\otimes S \arrow[u, "c_{Q,S'}\otimes \id_S"]
            \end{tikzcd}
        \end{center}
        both rectangles commute by \cref{eq: composition of braiding} for the triplet $(S,Q,S)$. From the upper rectangle, by \cref{remark: scalar maps braiding on isomorphic objects} for $S\otimes Q\cong Q$ and $c_{S,S'} = -1$, we get that $a_{S,Q,S} = -1$, while the lower rectangle shows that $c_{S,Q} = c_{Q,S}$.

        $a_{S,Q,S} = -1$ implies that $(c_{S,Q})^2 = -1$, that is $c_{Q,S}^2 = -1$, but $c_{Q,S}\cdot {c_{S,Q}} = 1$ (because $S\in \E_{+}$), thus $c_{Q,S} = -c_{S,Q}$, which yields a contradiction.
        
        By \cref{eq: Q times Q}, if $r=0$ then $Q\otimes Q = \oplus_{g\in G}P_g$ for $P_g$ the projective cover of the simple object $L_g$, and all the projective covers of simple objects have the same Frobenius-Perron dimension, thus $\FPdim(\C) = 2|G|\FPdim(P_e)$ for $P_e$ the projective cover of $\1$, hence $\C$ is weakly integral.
    \end{proof}
    As a result of \cref{thm: non semisimple braided near-group categories have fusion rule 0}, when we consider non-semisimple braided near-group categories $\C$, the generalized fusion rule is always $(\pic(\D),0)$, so as an abbreviation, we will simply say that $\C$ is a $\pic(\D)$-braided near-group category. 
    
    \subsection{De-equivariantization of near-group categories}
    In this subsection, we prove that every $\pic(\D)$-braided near-group category is "an extension" of a \textit{non-degenerate} braided non-semisimple near-group category by some symmetric fusion category, that is, there is a braided exact sequence:
    $$
        \Rep(G)\xrightarrow{i}\C\xrightarrow{F}\C_{nd}
    $$
    for $\C_{nd}$ a non-degenerate braided non-semisimple near-group category. This implies that the classification of braided non-semisimple near-group categories restricts to the classification of non-degenerate braided non-semisimple near-group categories, in a way.
    \begin{prop}\label{prop: de-equivariantization of near-group is near-group}
        Let $\C$ be a $\pic(\D)$-braided near-group category for some $\D$. Assume that for $H<\pic(\D)$ we have $\Rep(H)\subset \C'$, then $\C_H$ is a braided non-semisimple near-group category.
    \end{prop}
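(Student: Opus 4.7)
The plan is to realize $\C_H$ as the de-equivariantization $\Mod_\C(A)$ with $A=\fun(H,\kk)\in\Rep(H)\subset\C'$ and to show that the simple-extension structure of $\C$ descends. By \cref{prop: de-equivariantizaion of braided finite tensor categories}, $\C_H$ is a braided finite tensor category and the free module functor $F:\C\to\C_H$, $F(X)=X\otimes A$, is a surjective braided tensor functor whose right adjoint (the forgetful functor) is exact; in particular, $F$ preserves projectives.

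By \cref{thm: non semisimple braided near-group categories have fusion rule 0}, the generalized fusion rule of $\C$ is $(G,0)$, so $Q\otimes Q\in\D$ and $\M\otimes\M\subset\D$, which exhibits $\C=\D\oplus\M$ as a $\mathbb{Z}/2$-graded tensor category. Since $A\in\D$ is homogeneous of degree $0$, this grading descends to $\C_H = \D_H \oplus \M_H$, where $\D_H:=\Mod_\D(A)$ and $\M_H:=\Mod_\M(A)$ (consisting of $A$-modules whose underlying object lies in $\D$ or $\M$ respectively). By \cref{ex: de-equivariantization of pointed braided category}, $\D_H$ is a pointed braided finite tensor category with Picard group $\pic(\D)/H$, and it is non-semisimple since $\D$ is.

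The remaining task is to show that $\M_H$ admits a unique simple projective object $Q_H$, so that $\C_H=\D_H\oplus\M_H$ fits the simple-extension pattern. Since $F$ preserves projectives, $F(Q)\in\M_H$ is projective, and the strategy is to show it is indecomposable, hence the projective cover of a unique simple $Q_H \in \M_H$. The endomorphism algebra $\End_{\C_H}(F(Q)) \cong \Hom_\C(Q, Q \otimes A)$ has dimension $|H|$ (using $Q \otimes L_\chi \cong Q$ for each invertible $L_\chi \in \D$ coming from $\M \simeq \Vect$ as a $\D$-bimodule), and its algebra structure is controlled by $m_A$ together with the associator of $\C$ on triples $(Q, L_\chi, L_{\chi'})$. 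The main obstacle is showing that this algebra is local: in the semisimple (fusion) analog it would split as $\kk^{|H|}$, producing $|H|$ pairwise non-isomorphic simple summands of $F(Q)$; the non-triviality of the associator of $\C$ on these triples, which encodes the fiber-functor structure on $\Rep(H)\subset\D$ (and reflects the fact that $\M\simeq\Vect$ as a $\D$-bimodule only via a non-strict monoidal structure), must collapse the idempotents and force indecomposability. Granted this, $Q_H$ is the unique simple projective in $\M_H$, so $\C_H = \D_H \oplus \M_H$ is a simple extension of the pointed non-semisimple category $\D_H$; being braided via $F$, it is a braided non-semisimple near-group category.
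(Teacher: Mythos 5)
Your overall architecture is reasonable and overlaps in part with the paper's: the $\mathbb{Z}/2$-decomposition $\C_H=\D_H\oplus\M_H$ does descend because $A\in\D$ is homogeneous of degree $0$, the identification of $\D_H$ as a pointed non-semisimple braided category via \cref{ex: de-equivariantization of pointed braided category} is fine, and studying $F(Q)=Q\otimes A$ is a legitimate way to attack $\M_H$. The problem is the step you yourself flag and then assume (``Granted this''): that $\End_{\C_H}(F(Q))\cong\Hom_\C(Q,Q\otimes A)$ is local, i.e.\ that $F(Q)$ is indecomposable. This is not merely unproven --- it is false whenever $|H|>1$. Since $\Rep(H)\subset\D$ is pointed, $A=\fun(H,\kk)$ decomposes as $\oplus_{h}L_h$ with each $L_h$ invertible and each multiplication component $L_h\otimes L_{h'}\to L_{hh'}$ an isomorphism; as each $\Hom_\C(Q,Q\otimes L_h)$ is one-dimensional, the composition product makes $\Hom_\C(Q,Q\otimes A)$ a twisted group algebra $\kk_\alpha[H]$ of the abelian group $H$ with all structure constants nonzero. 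Over an algebraically closed field of characteristic zero such an algebra is semisimple by Maschke, hence local only when $H$ is trivial. So the associator cannot ``collapse the idempotents''; the most it can do is force $\kk_\alpha[H]$ to be a single matrix block $M_{\sqrt{|H|}}(\kk)$, in which case $F(Q)\cong Y^{\oplus\sqrt{|H|}}$ for a single simple projective $Y$ with underlying object $Q^{\oplus\sqrt{|H|}}$. That weaker conclusion --- all indecomposable summands of $F(Q)$ are simple and mutually isomorphic --- is what you actually need, and your argument as written does not deliver it; to salvage your route you would have to replace ``local'' by ``simple as an algebra'', i.e.\ prove the cocycle $\alpha$ is nondegenerate on $H$, which is an additional computation you have not supplied.

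The paper sidesteps $\End(F(Q))$ entirely. It takes an arbitrary simple $Y\in\C_H$, realizes it as a quotient of $F(X)$ with $X\in\C$ indecomposable of minimal Loewy length, and argues by cases on the top $L$ of $X$: if $L\in\D$, then $Y\cong F(L)$ is invertible (minimality of the Loewy length rules out the kernel contributing); if $L=Q$, then $X=Q$, the adjunction $\Hom_A(Q\otimes A,Y)=\Hom_\C(Q,\bar Y)$ forces $\bar Y$ to be a direct sum of copies of $Q$, and separability of $A$ (so that $Y$ is a direct summand of $Y\otimes A$, which is projective) gives projectivity and then uniqueness of $Y$. In particular the paper never needs, and never claims, that $F(Q)$ itself is indecomposable.
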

    \begin{proof}
        By \cref{prop: de-equivariantizaion of braided finite tensor categories} $\C_H$ is a well-defined braided finite tensor category, and the free module functor $F:\C\to \C_H$ is a braided tensor functor. This functor is surjective and normal by \cite{BN3}.
        
        Let $Y\in \C_H$ be a simple object. By Lemma 2.1 in \cite{BN2}, $Y$ is a quotient of $F(X)$ for some $X\in \C$, that is we have a surjective morphism $\psi:F(X)\to Y$. Since $Y$ is simple and $F$ is a tensor functor (hence additive), we may assume that $X\in \C$ is indecomposable, and we can choose $X$ such that $\psi:F(X)\to Y$ is non-zero and $X$ has minimal Loewy length. Let $L$ be the top of $X$, then $L = Q$ or $L\in \D$, and we have a surjective morphism $\varphi:X\to L$. 

        Assume $L\in \D$. We have an exact sequence in $\D$:
        $$
            0\to Z\to X\xrightarrow{\varphi} L\to 0 
        $$
        and $\FPdim(L) = 1$, thus by \cref{prop: invertible objects under de-equivariantization}, $F(L)$ is also invertible, and $F(\varphi):F(X)\to F(L)$ is a surjective morphism (because $F$ is exact). If $F(L)\simeq Y$ we are done, otherwise $\varphi|_{F(Z)}:F(Z)\to Y$ must be  non-zero, but the Loewy length of $Z$ is smaller than that of $X$, which contradicts the choice of $X$.
        
        If $L = Q$, by the assumption that $X$ is indecomposable, we must have $X = Q$. By \cite{EO}, surjective tensor functors map projective objects to projective objects, thus $F(X) = Q\otimes A$ is projective in $\Mod_\C(A)$ for $A:=\fun(H,\kk)$ the algebra object in $\Rep(H)$. By Lemma 7.8.12 in \cite{EGNO}, $\Hom_A(Q\otimes A,Y) = \Hom_{\C}(Q,\bar{Y})$ for $\bar{Y}$ the $A$-module $Y$ considered as an object in $\C$, thus as an object in $\C$, we have $Y = Z\oplus Q^{\oplus m}$ for some $m\in \N$, $Z\in \D$.

        The category $\Mod_\C(A)$ is additive, and as an object in $\C$, $A$ is a direct sum of simple objects in $\D$, thus $Q\otimes A$ is a direct sum of copies of $Q$, while $Z\otimes A\in \D$. Since we assume that $Y$ is simple as an $A$-module, this implies that $Z = 0$.

        The algebra $A$ is separable (as an algebra object in $\Rep(H)$), so $Y$ is a direct summand of $Y\otimes A$ in $\Mod_\C(A)$, so to prove that $Y$ is projective in $\Mod_\C(A)$, it is enough to prove that $Y\otimes A$ is projective in $\Mod_\C(A)$, but this is clear because of Lemma 7.8.12 in \cite{EGNO} and the fact that $\bar{Y}$ is a direct sum of copies of $Q$. In addition, it is clear from the proof that such $Y$ is uniquely determined by the fact that $L = Q$. 

        We conclude that $\Mod_\C(A)$ is a braided, with a unique simple projective object, and all the other objects are invertible, hence it is a braided non-semisimple near-group category.
    \end{proof}
    \begin{crl}\label{crl: simple objects in C_H}
        Let $\C$ be a $\pic(\D)$-braided near-group,  $H<\pic(\D)$ such that $\Rep(H)\subset \C'$, and for $X\in \C_H$ denote by $\bar{X}$ the underlying object in $\C$.
        \begin{enumerate}
            \item If $Y\in \C_H$ is the unique simple projective object, then $\bar{Y} = Q^{\oplus m}$ for $m\in \N$.
            \item If $Y\in \C_H$ is invertible, then $\bar{Y}$ is a direct sum of invertible objects in $\D$.
        \end{enumerate}
    \end{crl}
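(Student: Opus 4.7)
The plan is to extract both statements from the analysis already carried out inside the proof of \cref{prop: de-equivariantization of near-group is near-group}, adding only one small dimension count and one application of the forgetful functor $\C_H\to \C$.

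Part (1) is in fact a restatement of an intermediate conclusion of that proof: when the witnessing indecomposable $X\in \C$ for the simple object $Y\in \C_H$ has top equal to $Q$, one first shows $X = Q$ and then, via Lemma 7.8.12 in \cite{EGNO} together with the decomposition of $A$ in $\C$ into invertible simples of $\D$, concludes $\bar Y\cong Q^{\oplus m}$ for some $m\in \N$. I would simply cite this step.

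For part (2), I would first check that an invertible $Y\in \C_H$ cannot be the unique simple projective of $\C_H$. Since $\C$ is non-semisimple, $P_\1\ne \1$, hence $\FPdim(P_\1)>1$; by \cref{eq: Q times Q} we have $\FPdim(Q)^2\geq \FPdim(P_\1)$, so $\FPdim(Q)>1$. Because the surjective tensor functor $F$ preserves Frobenius--Perron dimensions, the unique simple projective of $\C_H$ has dimension $\FPdim(Q)>1$ and so cannot be invertible. Invoking the dichotomy established inside the proof of \cref{prop: de-equivariantization of near-group is near-group}, the witnessing indecomposable $X$ for $Y$ has its top $L$ in $\D$, and $Y\cong F(L) = L\otimes A$ with $L$ invertible in $\D$. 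Applying the forgetful functor, and using that in $\C$ the object $\bar A$ decomposes as $\bigoplus_{h\in H} L_h$ via the embedding $\Rep(H)\hookrightarrow \C'\subset \D$, we obtain $\bar Y \cong \bigoplus_{h\in H} L\otimes L_h$, a direct sum of invertible objects of $\D$.

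I do not foresee a serious obstacle: both items are essentially bookkeeping on top of the dichotomy already proved. The only non-trivial input is the dimension inequality $\FPdim(Q)>1$, which is what guarantees that an invertible object of $\C_H$ cannot be the simple projective coming from $Q$.
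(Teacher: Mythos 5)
Your argument follows the same route as the paper: for part (1) the paper likewise just points back to the proof of \cref{prop: de-equivariantization of near-group is near-group}, and for part (2) it invokes \cref{ex: de-equivariantization of pointed braided category}, which is precisely the computation $\bar Y\cong\bigoplus_{h\in H}L\otimes L_h$ that you carry out by hand via $Y\cong F(L)=L\otimes A$. The only step that needs tightening is your dimension count ruling out that an invertible $Y$ lands in the $L=Q$ branch of the dichotomy. The functor $F$ does preserve Frobenius--Perron dimension, but $F(Q)=Q\otimes A$ is not simple in general: $\End_A(Q\otimes A)\cong\Hom_{\C}(Q,\bigoplus_{h\in H}Q\otimes L_h)\cong\kk^{|H|}$, so $F(Q)\cong Y^{\oplus m}$ with $m^2=|H|$, and the simple projective of $\C_H$ has dimension $\FPdim(Q)/m$, not $\FPdim(Q)$. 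The conclusion you need still holds: $\FPdim_{\C_H}(Y)^2=\FPdim(Q)^2/|H|=[G:H]\cdot\FPdim(P_{\1})>1$ because $\D$ is not semisimple (here $r=0$ gives $\FPdim(Q)^2=|G|\FPdim(P_{\1})$). Alternatively, one can avoid dimensions altogether: if $Y$ were invertible and projective, then $\1_{\C_H}\cong Y\otimes_A Y^{*}$ would be projective, and by separability of $A$ the object $A=\bigoplus_{h\in H}L_h$ would be a direct summand in $\C$ of the projective object $(Q\otimes Q)^{\oplus m^2}$, forcing $\1_{\C}$ to be projective and $\C$ semisimple, a contradiction. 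Either patch is one line, so this is a slip in a side remark rather than a gap in the approach.
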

    \begin{proof}
        The first claim follows directly from the proof of \cref{prop: de-equivariantization of near-group is near-group}. The second claim follows from \cref{ex: de-equivariantization of pointed braided category}.
    \end{proof}
    \begin{prop}\label{prop: de-equivariantization to non-degenerate}
    Let $\C$ be a $\pic(\D)$-braided near-group category, then there exist $H<\pic(\D)$ such that $\C_H$ is a non-degenerate braided near-group category.
    \end{prop}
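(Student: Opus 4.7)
The plan is to let $H<\pic(\D)$ be a subgroup \emph{maximal} with respect to the property $\Rep(H)\subset\C'$; such $H$ exists by finiteness of $\pic(\D)$, since the trivial subgroup always satisfies the condition. By \cref{prop: de-equivariantization of near-group is near-group}, $\C_H$ is a braided non-semisimple near-group category, and it remains to show that $\C_H$ is non-degenerate.

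By \cref{prop: E_+ is central in C} applied to $\C_H$, we have $(\C_H)'=\E_+^{\C_H}$, so the task reduces to proving $\E_+^{\C_H}=\Vect$. I would argue by contradiction: suppose $Y\in\E_+^{\C_H}$ is a simple object with $Y\not\cong\1$. Since $\E_+^{\C_H}$ lies in the pointed part of $\C_H$, \cref{crl: simple objects in C_H} forces $Y$ to be invertible, with $\bar Y\cong\oplus_{g\in B}L_g$ for some $H$-coset $B\subset\pic(\D)$, $B\neq H$. Using the compatibility of the braidings provided by \cref{prop: de-equivariantizaion of braided finite tensor categories}, every $L_g$ with $g\in B$ must centralize $Q$ in $\C$, so $B\subset\pic(\E_+)=\pic(\C')$.

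Since $\E_+^{\C_H}$ is symmetric by \cref{prop: the subcategory E+}, the invertible $Y$ has twist $\pm 1$; this value equals $q(g)$ for any representative $g\in B$, which is well-defined on the coset because $q|_H=1$ and $H\subset\pic(\C')$ lies in the radical of the bilinear form associated to $q$. If $q(g)=+1$, then $H':=\langle H,B\rangle\subseteq\pic(\C')$ satisfies $q|_{H'}=1$, whence $\Rep(H')\subset\C'$ strictly enlarges $H$, contradicting maximality. If $q(g)=-1$, then $Y^{\otimes 2}\cong\1$ and $c_{Y,Y}=-\tau_{Y,Y}$ in $\C_H$, while $Y$ centralizes the simple projective $Y_Q$ of $\C_H$. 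Substituting $(Y,Y_Q)$ for $(S,Q)$ in the two-diagram chase from the proof of \cref{thm: non semisimple braided near-group categories have fusion rule 0}---which uses an isomorphism $Y_Q\otimes Y\simeq Y_Q$ and the triviality of the associators on $(Y,Y,Y_Q)$, $(Y,Y_Q,Y)$, $(Y_Q,Y,Y)$ (inherited from the bimodule structure of $\C_H$ as a simple extension of its pointed subcategory)---yields the scalar identity $\lambda=-\lambda$ with $\lambda\neq 0$, a contradiction.

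The main obstacle I anticipate is the descent step: making precise how a simple invertible $Y\in(\C_H)'$ lifts to a family $\{L_g\}_{g\in B}\subseteq\pic(\E_+)$ sharing a common twist value, and confirming that the compatibility of the braidings and the bimodule-style trivial associators needed to rerun the diagram chase are available in $\C_H$. Once these descent facts are in place, the dichotomy $q(g)=\pm 1$ forces $\E_+^{\C_H}=\Vect$, and so $\C_H$ is non-degenerate.
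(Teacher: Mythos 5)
Your overall strategy is close to the paper's: de-equivariantize by a pointed central subcategory, use \cref{prop: de-equivariantization of near-group is near-group} and \cref{crl: simple objects in C_H} to see that a nontrivial simple object of $(\C_H)'$ would be invertible with underlying object $\oplus_{g\in B}L_g$ for some $H$-coset $B$, and use the compatibility of the braidings to conclude that $B\subset\pic(\E_+)$. The essential difference is the choice of $H$: the paper takes $H:=\pic(\E_+)$, the \emph{entire} Picard group of $\C'=\E_+$, so that the conclusion $B\subset\pic(\E_+)=H$ immediately forces $B$ to be the trivial coset and $L\cong\1$ --- no twists, no case analysis, no diagram chase. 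Your maximal $H$ with $\Rep(H)\subset\C'$ coincides with $\pic(\E_+)$ only if the pointed part of the symmetric category $\E_+$ is Tannakian, and the entire dichotomy $q(g)=\pm1$ in your proof is an attempt to establish exactly that. The $q(g)=+1$ branch is fine.

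The gap is in the $q(g)=-1$ branch, and it is more serious than the ``descent'' issues you flag. The two-diagram chase from the proof of \cref{thm: non semisimple braided near-group categories have fusion rule 0}, run with the hypotheses you actually invoke ($Y^{\otimes2}\cong\1$, $c_{Y,Y}=-\tau_{Y,Y}$, an isomorphism $Y_Q\otimes Y\cong Y_Q$, trivial associators), nowhere uses that $Y$ centralizes $Y_Q$. If it were valid in $\C_H$ under those hypotheses alone, it would apply verbatim to the odd invertible object $\tilde S$ of $\sVec\subset\sRep(W)\subset\C_H$ supplied by \cref{thm: non degenerate braided non semisimple near-group categories} --- which also satisfies $\tilde S^{\otimes2}\cong\1$, $c_{\tilde S,\tilde S}=-\tau_{\tilde S,\tilde S}$ and $Y_Q\otimes\tilde S\cong Y_Q$ --- and would therefore rule out the very non-degenerate categories that the paper and \cite{GS} exhibit. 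So either the associators you need are not in fact trivial in $\C_H$, or the chase requires an additional hypothesis distinguishing your $Y\in(\C_H)'$ from $\tilde S\notin(\C_H)'$; you supply neither, so the contradiction $\lambda=-\lambda$ is not established. The clean repair is to avoid this branch altogether: take $H=\pic(\E_+)$ as the paper does (verifying, if one wants to be careful, that the pointed part of $\E_+$ is Tannakian so that $\Rep(H)$ and the de-equivariantization make sense), after which non-degeneracy of $\C_H$ follows in one line from $B\subset\pic(\E_+)=H$.
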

    \begin{proof}
        Let $\C$ be a $\pic(\D)$-braided near-group category. By \cref{prop: the subcategory E+} and \cref{prop: E_+ is central in C}, $\E_+$ is a central symmetric subcategory of $\C$ and $\D$, thus in particular it is a symmetric pointed tensor category. Set $H:=\pic(\E_+)$, then $\Rep(H)$ is also symmetric and central in $\D$ and $\C$. By \cref{prop: de-equivariantization of near-group is near-group}, $\C_H$ is a braided non-semisimple near-group category, so we need only to prove that $\C_H$ is non-degenerate. Set $A:=\fun(H, \kk)$.

        Let $Y\in \C_H$ be the unique simple projective object, and let $L\in \C_H$ be a simple invertible object. By \cref{crl: simple objects in C_H}, we have $\bar{Y} = Q^{\oplus m}$ for some $m\in \N$ and $\bar{L} = \oplus_{f\in B}L_f$ for $B$ a coset of $H$ in $G$. Assume that $\tilde{s}_{Y,L} = \id_{Y\otimes_A L}$ for $\tilde{s}$ the squared braiding in $\C_H$. By \cref{prop: de-equivariantizaion of braided finite tensor categories} we have a commutative diagram:
        \begin{center}
            \begin{tikzcd}
                \bar{Y}\otimes \bar{L} \arrow [r, "s_{\bar{Y},\bar{L}}"] \arrow[d, " "]& \bar{L}\otimes \bar{Y} \arrow[d, " "] \\
                Y\otimes_A L \arrow[r, "\tilde{s}_{Y,L}"] & L\otimes_A Y
            \end{tikzcd}
        \end{center}
        thus $s_{\bar{Y},\bar{L}} = \id_{\bar{Y}\otimes \bar{L}}$. By the definition of $\E_+$, since $\bar{Y} = Q^{\oplus m}$, we have $L_f\in \E_+$ for all $f\in B$. This implies (by \cref{ex: de-equivariantization of pointed braided category}) that $B$ is the trivial coset, that is $\bar{L} = \oplus_{h\in H}L_h$, hence $L = \1_{\C_H}$, so in $\C_H$ the object $Y$ is centralized only by $\1_{\C_H}$, thus $\C_H$ is non-degenerate as required.
    \end{proof}
    
    \restate{Theorem}{\ref{thm: exact sequence of braided near-group categories}}{Let $\C$ be a braided non-semisimple near-group category, then there exist a non-degenerate braided non-semisimple near-group category $\D$ and a finite group $G$ such that $\C$ fits in a modularization exact sequence:
    $$
        \Rep(G)\xrightarrow{i}\C\xrightarrow{F}\D.
    $$}
    \begin{proof}[Proof of \cref{thm: exact sequence of braided near-group categories}]
        Let $\C$ be a braided non-semisimple near-group category, i.e. a braided simple extension of a braided pointed finite tensor category $\D$. By \cref{thm: non semisimple braided near-group categories have fusion rule 0}, $\C$ is a $\pic(\D)$-braided near-group category. By \cref{prop: the subcategory E+} and \cref{prop: E_+ is central in C}, the category $\E_+$ generated by objects in $\D$ that centralize the unique simple projective object $Q\in \C$ is a symmetric subcategory of $\D$ which is central in $\C$. Hence, by \cref{prop: de-equivariantization to non-degenerate}, taking $H:=\pic(\E_+)$ we get that $\C_{H}$ is a non-degenerate $G_Q$-braided near-group category and we get a modularization exact sequence:
        $$
     \Rep(\pic(\E_+))\xrightarrow{i}\C\xrightarrow{F}\C_H.
        $$
    \end{proof}
    \subsection{Non-degenerate near-group categories}
    As a result of \cref{thm: exact sequence of braided near-group categories}, in order to fully classify braided non-semisimple near-group categories, we need to classify \textit{non-degenerate} braided non-semisimple near-group categories. This section is devoted to this classification.
    
    \begin{prop}\label{prop: non-degenerate near-group category has picard group Z2}
        If $\C$ is a non-degenerate braided non-semisimple near-group category with a generalized fusion rule $(G,0)$, then $G=\mathbb{Z}_2$.
    \end{prop}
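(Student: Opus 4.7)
The plan is to use non-degeneracy of $\C$ to embed $G$ cyclically into $\kk^*$ and to bound $|G|$ above by $2$ via a single squared-braiding computation on $Q \otimes Q \otimes L_g$.

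First I would set up the basic constraints. Non-degeneracy $\C' = \Vect$, combined with \cref{prop: E_+ is central in C}, forces $\E_+ = \Vect$; hence the character $\psi \colon G \to \kk^*$ from \cref{sec: Q-grading of Pic(D)} has kernel $\pic(\E_+) = \{e\}$ and is injective, so $G$ is cyclic. Next, $Q \notin \D'$, since otherwise \cref{crl: If Q in D' then D symmetric} would give $\D = \E_+ = \Vect$, contradicting non-semisimplicity of $\D$; thus $\D' = \D'_\D \subseteq \D$. Applying Shimizu's formula \cref{eq: shimizu} with $\C' \cap \D = \Vect$ and $\FPdim(\C) = 2\FPdim(\D)$ (using $r = 0$ from \cref{thm: non semisimple braided near-group categories have fusion rule 0}) yields $\FPdim(\D'_\D) = 2$. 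Since $\D$ is pointed, every simple of $\D'_\D$ lies in $G$, and the FP-dimension count forces $\D'_\D = \langle \1, L \rangle$ for some $L \in G$ of order $2$; in particular $|G| \geq 2$.

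The core computation is to evaluate $s_{Q \otimes Q, L_g}$ in two ways. On one hand, applying \cref{eq: squared braiding on (X times Y) times Z} with $X = Y = Q$ and $Z = L_g$, the fact that $s_{Q, L_g} = \lambda_g \, \id$ is a scalar causes all the interior associators to cancel, and one obtains $s_{Q \otimes Q, L_g} = \lambda_g^2 \, \id_{Q \otimes Q \otimes L_g}$. On the other hand, using the decomposition $Q \otimes Q = \oplus_{h \in G} P_h$ and naturality of $s$ along each summand inclusion $P_h \hookrightarrow Q \otimes Q$, one gets $s_{P_h, L_g} = \lambda_g^2 \, \id_{P_h \otimes L_g}$ for every $h \in G$; composing further with the head projection $P_h \twoheadrightarrow L_h$ and invoking naturality once more gives $b(h, g) \, \id = \lambda_g^2 \, \id$, so $b(h, g) = \lambda_g^2$ for all $g, h \in G$.

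Specializing to $h = e$ forces $\lambda_g^2 = 1$ for every $g \in G$, hence $\psi(G) \subseteq \{\pm 1\}$. Injectivity of $\psi$ then gives $|G| \leq 2$, and combined with $|G| \geq 2$ from the opening setup, $G \cong \mathbb{Z}_2$. The only technical point I expect is the associator bookkeeping in the key identity $s_{Q \otimes Q, L_g} = \lambda_g^2 \id$; the rest is a short, repeated application of naturality of the braiding together with the fact that all braidings involving $Q$ and an invertible object are scalars.
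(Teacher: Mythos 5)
Your proposal is correct, and while the central computation is the same as the paper's, you finish the argument by a genuinely different (and somewhat shorter) route. Both proofs evaluate $s_{Q\otimes Q,L_g}$ in two ways to extract $\lambda_g^2$: via \cref{eq: squared braiding on (X times Y) times Z} it equals $\lambda_g^2\id$, and via $Q\otimes Q=\oplus_h P_h$ and naturality it restricts to $s_{P_h,L_g}=\lambda_g^2\id$. To deduce $\lambda_g^2=1$ you project onto the head $P_e\twoheadrightarrow\1$, using only the definition of a projective cover; the paper instead uses the socle inclusion $\1\hookrightarrow P_e$, which costs an appeal to unimodularity of the non-degenerate category $\C$ to identify the socle of $P_e$ with $\1$ --- your variant avoids that input. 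The real divergence is in the conclusion: the paper runs the implication $\lambda_g^2=1\Rightarrow s_{P,L_g}=\id$ for all projectives $P$, propagates this to all indecomposables via projective covers, and concludes that every non-trivial $L_g$ lies in $(\D)'_{\D}\simeq\sVec$, which has a unique non-trivial invertible; you instead invoke the character $\psi\colon G\to\kk^*$ of \cref{sec: Q-grading of Pic(D)}, whose kernel is $\pic(\E_+)=\pic(\C')=\{e\}$ by non-degeneracy and \cref{prop: E_+ is central in C}, so that $\lambda_g^2=1$ together with injectivity of $\psi$ immediately gives $|G|\le 2$. Your ending is cleaner and bypasses the centralizer bookkeeping; the paper's ending has the side benefit of exhibiting the non-trivial $L_g$ explicitly as the fermion of the slightly degenerate $\D$, which it reuses later. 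One cosmetic remark: your detour through $\FPdim((\D)'_{\D})=2$ is only used to get $|G|\ge 2$, which already follows from $\D$ being pointed and non-semisimple (a finite tensor category whose only simple object is $\1$ is $\Vect$), so that paragraph could be trimmed.
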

    \begin{proof}
        By \cref{prop: the subcategory E+}, since $\C$ is non-degenerate we have $\FPdim(\E_+) = 1$, thus by \cref{eq: FPdim(D')} (together with $r=0$) we get that $\FPdim(\D') = 2$, that is $\D$ is slightly degenerate. 
        
        Let $L_g\in \D$ be a non-trivial simple object, and denote $s_{Q,L_g} = \lambda_g$. By \cref{eq: squared braiding on (X times Y) times Z} for the triplet $(Q,Q,L_g)$ we get that $s_{Q\otimes Q,L_g} = \lambda_g^2\id_{Q\otimes Q\otimes L_g}$. On the other hand, by \cref{eq: Q times Q}, the naturality of the braiding, and the fact that $s_{Q\otimes Q,L_g}$ is a scalar map, we get that $s_{\bar{P},L_g} = \lambda_g^2\bar{P}\otimes L_g$. In particular, for $P_e$ the projective cover of $\1$ we have $s_{P_e,L_g} = \lambda_g^2\id_{P_e\otimes L_g}$. $\C$ is non-degenerate, thus unimodular (by \cite{EGNO}), so $\1$ is the socle of $P_e$, thus $\lambda^2_g\id_{L_g} = s_{\1,L_g} = \id_{L_g}$, hence $\lambda_g^2 = 1$.  

        We conclude that for each projective cover $P_h$ of $L_h\in \pic(\D)$ we have $s_{P_h,L_g} = \id_{P_h\otimes L_g}$, thus for each projective object $P\in \D$ we must have $s_{P,L_g} = \id_{P\otimes L_g}$. Let $X\in \D$ be indecomposable, then $X$ has a projective cover $P'$, a surjective map $\pi:P'\to X$, and we get a commutative diagram:
        \begin{center}
            \begin{tikzcd}
                P'\otimes L_g \arrow[r, "s_{P'\otimes L_g}"] \arrow[d, "\pi\otimes \id_{L_g}"] & P'\otimes L_g \arrow[d, "\pi\otimes \id_{L_g}"] \\
                X\otimes L_g \arrow[r, "s_{X,L_g}"] & X\otimes L_g
            \end{tikzcd}
        \end{center}
        which implies that $s_{X,L_g}=\id_{X\otimes L_g}$ as well.  Overall, we get that for $L_g$ a non-trivial simple object in $\D$, $L_g\in \D'$, but $\D'$ is slightly degenerate, thus $L_g$ must be the unique non-trivial invertible element in $\D'\simeq \sVec$, hence $G=\mathbb{Z}_2$ as required.
        
    \end{proof}
    
    \restate{Theorem}{\ref{thm: non degenerate braided non semisimple near-group categories}}{Every non-degenerate non-semisimple braided near-group category $C$ is a braided simple extension of a category $\D$ braid equivalent to $\sRep(W\oplus W^*)$, with $\sRep(W)$ a Lagrangian subcategory, for some purely odd supervector space $W$.}
    \begin{proof}[Proof of \cref{thm: non degenerate braided non semisimple near-group categories}]
        Let $\C$ be a non-degenerate, braided non-semisimple near-group category with fusion rule $(G,0)$. By \cref{prop: non-degenerate near-group category has picard group Z2}, we have $G = \mathbb{Z}_2$. This implies that  $\D$ is a braided finite tensor category with only two simple objects, both invertible. Since $\D_f = \sVec$ is a symmetric subcategory of $\D$, there is some Lagrangian subcategory $\D_0$ in $\D$, and we must have $\sVec\subset \D_0$, so by \cite{D}, $\D_0$ is equivalent to a category of representations of some supergroup, and since $|G|=2$ this supergroup must be $\wedge W$ for some purely odd supervector space $W$, that is $\D_0\cong \sRep(W)$. Now using \cite{GS} we get the required result.
    \end{proof}

    In \cref{thm: non semisimple braided near-group categories have fusion rule 0} we proved that every braided non-semisimple near-group category is weakly integral. Using \cref{thm: exact sequence of braided near-group categories} and \cref{thm: non degenerate braided non semisimple near-group categories} we get a stronger result:
    \begin{crl}\label{crl: braided non-semisimple near-group category is not integral}
        A braided non-semisimple near-group category $\C$ is non-integral weakly integral.
    \end{crl}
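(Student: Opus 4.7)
The plan is to combine \cref{thm: non semisimple braided near-group categories have fusion rule 0} (weak integrality and the vanishing of $r$) with the parity constraint of \cref{thm: parameters of braided non-semisimple near-group category}. Since weak integrality of $\C$ is already established, the only thing left to prove is that $\C$ fails to be integral, i.e.\ that some object of $\C$ has non-integer Frobenius--Perron dimension. The natural candidate is the unique simple projective object $Q$.

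First, I would compute $\FPdim(Q)^2$ explicitly. By \cref{eq: Q times Q} applied with $r=0$, one has $Q\otimes Q = \bigoplus_{g\in G} P_g$, where $P_g$ is the projective cover of the invertible simple $L_g\in\D$ and $G=\pic(\D)$. Because $\D$ is pointed and admits a fiber functor, and each $L_g$ is invertible, tensoring with $L_g$ is an autoequivalence sending $P_e$ to $P_g$, so $\FPdim(P_g)=\FPdim(P_e)$ for every $g$. Taking FP-dimensions in the decomposition of $Q\otimes Q$ therefore yields
\[
    \FPdim(Q)^2 \;=\; |G|\cdot \FPdim(P_e).
\]

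Next, I would invoke \cref{thm: parameters of braided non-semisimple near-group category}: $|G|=2^{n_g}$, $\FPdim(P_e)=2^{n_p}$, with $n_g+n_p$ odd. Hence $\FPdim(Q)^2 = 2^{n_g+n_p}$ is an odd power of $2$, so $\FPdim(Q)=2^{(n_g+n_p)/2}$ is irrational. Consequently $\C$ contains a simple object of non-integer FP-dimension and cannot be integral; combined with weak integrality from \cref{thm: non semisimple braided near-group categories have fusion rule 0}, this gives the corollary.

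The deduction itself is a short numerical manipulation, and there is no genuine obstacle at this stage: all the substantive content, in particular the parity condition $n_g+n_p\equiv 1 \pmod 2$, has already been built up through \cref{thm: exact sequence of braided near-group categories} and \cref{thm: non degenerate braided non semisimple near-group categories}, which reduce the problem to the non-degenerate $G=\mathbb{Z}_2$ situation and invoke the classification of \cite{GS}. Thus the hard part is not in the proof of this corollary but in the structural theorems feeding into it.
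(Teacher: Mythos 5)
Your argument is circular relative to the paper's logical structure. You invoke \cref{thm: parameters of braided non-semisimple near-group category} to get $|G|=2^{n_g}$, $\FPdim(P_e)=2^{n_p}$ and the parity condition $n_g+n_p\equiv 1\pmod 2$, but in the paper that theorem is stated and proved \emph{after} \cref{crl: braided non-semisimple near-group category is not integral}, and its proof begins by citing the proof of this very corollary: the existence of $l\in\N$ with $\FPdim(Q)=\sqrt{2}\cdot 2^l$ is exactly what forces $\FPdim(\D)=2^{2l+1}$ and hence $n_p+n_g=2l+1$ odd. There is no independent derivation in the paper of the powers-of-two statement or of the parity constraint; both are consequences of the irrationality of $\FPdim(Q)$, not inputs to it. So your claim that the parity condition ``has already been built up through \cref{thm: exact sequence of braided near-group categories} and \cref{thm: non degenerate braided non semisimple near-group categories}'' does not hold as stated.

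Your identity $\FPdim(Q)^2=|G|\cdot\FPdim(P_e)$ is correct and is a legitimate starting point, but to close the argument you must compute $\FPdim(Q)$ directly the way the paper does: use \cref{thm: exact sequence of braided near-group categories} to pass to the non-degenerate quotient $\bar{\C}$, use \cref{thm: non degenerate braided non semisimple near-group categories} to identify its pointed part with $\sRep(W\oplus W^*)$, compute $\FPdim(\bar{\D})=2^{2\dim(W)+1}$ so that the simple projective $Q_0$ of $\bar{\C}$ has $\FPdim(Q_0)=\sqrt{2}\cdot 2^{\dim(W)}$, and then use that the free module functor is surjective and hence (by \cite{EO}) preserves Frobenius--Perron dimensions to conclude $\FPdim(Q)=\FPdim(Q_0)\notin\N$. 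With that substitution your proof becomes essentially the paper's; as written, it assumes what it is trying to prove.
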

    \begin{proof}
        by \cref{thm: non semisimple braided near-group categories have fusion rule 0} we know that $\C$ is weakly integral, so we need to prove it is not integral. By \cref{thm: exact sequence of braided near-group categories}, we have an exact sequence:
        $$
            0\to \Rep(G)\to \C\to \bar{\C}\to 0
        $$
        for $\bar{\C}$ a non-degenerate braided non-semisimple near-group category. By \cref{thm: non degenerate braided non semisimple near-group categories}, $\bar{\C}$ is a simple extension of $\bar{\D}$ which is braid equivalent to $\sRep(W\oplus W^*)$. A direct calculation shows that $\FPdim(\bar{\D}) = 2^{2\dim(W)+1}$, so denoting by $Q_0$ the unique simple projective object of $\bar{\C}$, we have $\FPdim(Q_0) = \sqrt{2}\cdot 2^{\dim(W)}$. Denoting by $Q$ the unique simple object in $\C$, we know that the functor $\C\to \bar{\C}$ is the free module functor and it maps $Q$ to $Q_0$. Since this is a surjective functor, by \cite{EO} it preserve Frobenius-Perron dimensions, thus $\FPdim(Q)=\sqrt{2}\cdot ^{\dim(W)}\notin \N$.  
    \end{proof}
    Using \cref{thm: exact sequence of braided near-group categories} and \cref{thm: non degenerate braided non semisimple near-group categories} we get the following result:
    \begin{prop}\label{prop: central element of order 2 in pic(D)}
        Let $\C$ be a $\pic(\D)$-braided non-semisimple near-group category, then $\D$ has a simple object $S$ such that $S\otimes S = \1$ and $S\in \D'$.
    \end{prop}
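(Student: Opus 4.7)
The plan is to produce $S$ by lifting the M\"uger-central order-$2$ fermion of the non-degenerate quotient back to $\D$, using the modularization exact sequence of \cref{thm: exact sequence of braided near-group categories}. Concretely, that theorem supplies an exact sequence $\Rep(H)\xrightarrow{i}\C\xrightarrow{F}\C_H$ with $H:=\pic(\E_+)$ and $\C_H$ a non-degenerate braided non-semisimple near-group category. By \cref{thm: non degenerate braided non semisimple near-group categories}, $\C_H$ is a braided simple extension of a category $\bar{\D}$ braid equivalent to $\sRep(W\oplus W^*)$ with Lagrangian subcategory $\sRep(W)\simeq\sVec$; in particular \cref{prop: non-degenerate near-group category has picard group Z2} gives $\pic(\bar{\D})=\mathbb{Z}_2$, and its non-trivial simple invertible $\bar{S}$ (the generator of the Lagrangian $\sVec$) satisfies $\bar{S}\otimes\bar{S}\cong\1$ and $\bar{S}\in\bar{\D}'$ inside $\C_H$.

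Next I would lift $\bar{S}$ to $\pic(\D)$. By \cref{ex: de-equivariantization of pointed braided category}, the underlying object of $\bar{S}$ in $\C$ is $\bigoplus_{g\in B}L_g$, where $B$ is the non-trivial $H$-coset in $\pic(\D)$; fix any $g\in B$, so that $F(L_g)\cong\bar{S}$ in $\C_H$. For each simple $L_h\in\D$, Schur's lemma gives a scalar $b(g,h)\in\kk^*$ with $s_{L_g,L_h}=b(g,h)\,\id$, and since $F$ is a braided tensor functor
$$
b(g,h)\,\id_{F(L_g\otimes L_h)} \;=\; F(s_{L_g,L_h}) \;=\; \tilde{s}_{F(L_g),F(L_h)} \;=\; \id_{F(L_g\otimes L_h)},
$$
where the last equality uses $F(L_h)\in\bar{\D}$ together with $\bar{S}\in\bar{\D}'$. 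Since $F(L_g\otimes L_h)\cong \bar{S}\otimes_A F(L_h)$ is a nonzero invertible object of $\C_H$, this forces $b(g,h)=1$. A Serre-subcategory argument as in \cref{prop: the subcategory E+} then upgrades centralization of all simples of $\D$ to centralization of all of $\D$, so $L_g\in \D'$. Combined with the inclusion $H=\pic(\E_+)\subseteq\pic(\C')\subseteq\pic(\D')$ coming from \cref{prop: E_+ is central in C}, this yields $\pic(\D)=H\cup B\subseteq\pic(\D')$.

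Finally, since $\pic(\D)/H\cong\pic(\bar{\D})=\mathbb{Z}_2$, the order $|\pic(\D)|=2|H|$ is even, so by Cauchy's theorem $\pic(\D)$ contains an element $g_0$ of order $2$. Setting $S:=L_{g_0}$ produces a simple object of $\D$ with $S\otimes S\cong\1$, and by the preceding paragraph $S\in\D'$, which is what was required. The one non-routine ingredient is the braided-functor identity $F(s_{L_g,L_h})=\tilde{s}_{F(L_g),F(L_h)}$ together with the non-vanishing of $F(L_g\otimes L_h)$ to pin down $b(g,h)=1$; everything else is bookkeeping with the structural theorems already established.
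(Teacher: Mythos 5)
Your proposal follows the same overall strategy as the paper's proof: de-equivariantize by $H=\pic(\E_+)$ via \cref{thm: exact sequence of braided near-group categories}, identify the order-two invertible $\bar{S}$ in the non-degenerate quotient $\C_H$ using \cref{thm: non degenerate braided non semisimple near-group categories} and \cref{prop: non-degenerate near-group category has picard group Z2}, and pull its centralizing property back along the braided functor $F$. Where you genuinely diverge is the group-theoretic endgame, and there your version is the more robust one. The paper argues that since $H$ has index $2$ in $G=\pic(\D)$ there exists $g_0\in G\setminus H$ with $g_0^2=1$; as stated this does not follow (take $G=\mathbb{Z}_4$ and $H=\{0,2\}$). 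You instead show that \emph{every} invertible object of $\D$ centralizes $\D$ --- those indexed by $H$ because $\E_+=\C'\subseteq\D'$, those indexed by the non-trivial coset $B$ by the braided-functor computation --- and then extract an order-two element anywhere in $G$ by Cauchy's theorem. This removes the need for the order-two element to lie outside $H$ and closes that gap.

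One step you should tighten: the ``Serre-subcategory argument as in \cref{prop: the subcategory E+}'' that upgrades centralization of the simples of $\D$ to centralization of all of $\D$. The closure under extensions in \cref{prop: the subcategory E+} rests on $Q$ being projective, so that $Q\otimes-$ splits short exact sequences; for the invertible, non-projective object $S=L_{g_0}$ that argument does not transfer verbatim. (The paper's own proof is no better here: it stops at $s_{X,S}=\id$ for $X\in\pic(\D)$ and leaves the same step implicit.) A clean repair within your framework: since $g_0^2=e$, the scalar $\lambda_{g_0}$ of \cref{sec: Q-grading of Pic(D)} satisfies $\lambda_{g_0}^2=\lambda_{g_0^2}=1$, so $s_{Q\otimes Q,S}=\id$ and hence $S$ centralizes every $P_h$; every object of $\D$ is a quotient of a direct sum of the $P_h$, and centralization passes to quotients by the naturality argument used in the proof of \cref{prop: non-degenerate near-group category has picard group Z2}, whence $S\in\D'$.
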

    \begin{proof}
        By \cref{thm: exact sequence of braided near-group categories} and \cref{thm: non degenerate braided non semisimple near-group categories}, for $\E_+= \C'$ we have a modularization exact sequence 
        $$
            \Rep(\pic(\E_+))\to \C\xrightarrow{F}\C_{\pic(\E_+)}
        $$
        and $\C_{\pic(\E_+)}$ is a non-degenerate braided non-semisimple near-group category. Denote $G := \pic(\C)$, $H := \pic(\E_+)$ (so $H<G)$, and denote the simple invertible objects of $\C$ by $L_g$ for $g\in G$.
        
        $\C_{H}$ has a unique non-trivial invertible object $\tilde{S}$. The functor $F$ is dominant, so we have $X\in \C$ such that $\tilde{S}$ is a subobject of $F(X)$. Let $L_g$ be the socle of $F(X)$, then $F(L_g)\simeq \tilde{S}$, hence $g\notin H$. 

        The functor $F$ is a tensor functor, so $F(L_g\otimes L_g) \simeq F(L_g)\otimes F(L_g) \simeq \tilde{S}\otimes \tilde{S} = \1$, thus $g^2\in H$.

        $G$ is an abelian group, so $(gH)^2 = g^2H = H$, that is, $H$ is a subgroup of index $2$, thus there is $g_0\in G\setminus H$ such that $g_0^2 = 1_G$. Set $S:=L_{g_0}$, then $S\otimes S = \1_{\C}$.
        
        By \cref{thm: non degenerate braided non semisimple near-group categories}, the category $\C_{H}$ has a braided subcategory $\tilde{\D}$ which is tensor equivalent to $\sRep(W\oplus W^*)$ for some purely odd supervector space $W$, and by \cite{GS}, $\tilde{S}\in \tilde{\D}'$. 

        By the proof of \cref{prop: de-equivariantization of near-group is near-group}, the functor $F:\C\to \C_{H}$ restricts (as a braided tensor functor) to $F_{\D}:\D\to \tilde{\D}$, thus by \cref{prop: de-equivariantizaion of braided finite tensor categories}, we have $s_{X,S} = \id_{X\otimes S}$ for all $X\in \pic(\D)$.    
    \end{proof}
    This proposition, along with the previous results, gives the following:
    
    \restate{Theorem}{\ref{thm: parameters of braided non-semisimple near-group category}}{Every braided non-semisimple near-group category $\C$ corresponds to a pair $(G,n_p)$ such that:
    \begin{enumerate}
        \item $G$ is a group of order $2^{n_g}$ with a central element of order $2$.
        \item $n_p\in \N$ and for $P_{\1}$ the projective cover of $\1_{\C}$, $\dim(P_e) = 2^{n_p}$ as a vector space.
        \item $n_p+n_g$ is an odd number.
    \end{enumerate}}
    \begin{proof}
        By definition, $\C$ is a braided simple extension of a braided pointed finite tensor category $\D$, and denote $G:=\pic(\D)$. By \cref{prop: central element of order 2 in pic(D)}, $G$ has a central element of order $2$. By the proof of \cref{crl: braided non-semisimple near-group category is not integral}, there exist $l\in \N$ such that $\FPdim(Q) = \sqrt{2}\cdot 2^l$. By \cref{thm: non semisimple braided near-group categories have fusion rule 0} and \cref{eq: Q times Q} we get that $\FPdim(\D) = 2^{2l+1}$. On the other hand, $\D$ is pointed, so denoting its simple objects by $L_g$ and their projective covers by $P_g$, for $g\in G$, we know that $\FPdim(P_g)$ is invariant to $g$, and $\FPdim(\D) = |G|\cdot \FPdim(P_e)$ (for $e\in G$ the unit). This imply that there exist $n_g\in \N$ such that $|G| = 2^{n_g}$ and $\FPdim(P_e) = 2^{2l+1-n_g}$, that is $n_p = 2l+1-n_g$. Since $\D$ is integral, $\dim(P_e)=\FPdim(P_e) = 2^{n_p}$, and $n_p+n_g = 2l+1$ is an odd number.
    \end{proof}

\end{document}